\documentclass[a4paper,11pt,reqno]{amsart}
\usepackage[english]{babel}
\usepackage[utf8]{inputenc} 
\usepackage{cmbright}
\usepackage[T1]{fontenc}
\usepackage{amsmath}
\usepackage{amsfonts}
\usepackage{amssymb}
\usepackage{amsthm,eepic}
\usepackage{mathrsfs}
\usepackage{enumitem,epic}
\usepackage{graphicx}
\usepackage{footnote,color}
\usepackage{hyperref}
\usepackage{comment}
\usepackage{xcolor}
\hypersetup{colorlinks=true,    linkcolor=blue,
citecolor=red,       filecolor=BrickRed,       urlcolor=darkgreen
}
\newmuskip\pFqmuskip

\newcommand*\pFq[6][8]{%
  \begingroup 
  \pFqmuskip=#1mu\relax
  \mathcode`\,=\string"8000
  \begingroup\lccode`\~=`\,
  \lowercase{\endgroup\let~}\pFqcomma
  {}_{#2}F_{#3}{\left[\genfrac..{0pt}{}{#4}{#5};#6\right]}%
  \endgroup
}
\newcommand{\pFqcomma}{\mskip\pFqmuskip}
\newcommand{\Vs}{\mathsf{V}}
\newcommand{\Gs}{\mathsf{G}}
\newcommand{\Es}{\mathsf{E}}
\newcommand{\GR}{\Gs^{\diamond}}
\newcommand{\CC}{{\mathbb C}}
\newcommand{\xb}{x}
\newcommand{\yb}{y}
\newcommand{\ps}{\hat{\mathsf{r}}}
\newcommand{\pps}{\check{\mathsf{r}}}
\newcommand{\Arm}{\mathrm{A}}
\DeclareMathOperator{\Dc}{Dc}
\DeclareMathOperator{\dc}{dc}
\DeclareMathOperator{\dn}{dn}
\DeclareMathOperator{\ns}{ns}
\DeclareMathOperator{\cn}{cn}
\DeclareMathOperator{\sn}{sn}
\DeclareMathOperator{\nc}{nc}
\DeclareMathOperator{\Id}{Id}
\newcommand{\ud}{\mathrm{d}}
\newcommand{\TT}{{\mathbb T}}
\newcommand{\ZZ}{{\mathbb Z}}
\newcommand{\RR}{{\mathbb R}}

\renewcommand{\geq}{\geqslant}
\renewcommand{\leq}{\leqslant}
\renewcommand{\epsilon}{\varepsilon}
\theoremstyle{plain}
\newtheorem{thm}{Theorem}

\newtheorem{prop}[thm]{Proposition}
\newtheorem{lem}[thm]{Lemma}
\theoremstyle{definition}

\theoremstyle{remark}


\definecolor{redd}{rgb}{0.9,0,0}

\oddsidemargin=0pt \evensidemargin=0pt \textwidth=150mm
\textheight=22.30cm \voffset=00mm \pretolerance=3000

\usepackage{color}
\definecolor{darkgreen}{rgb}{0,0.4,0}
\definecolor{MyDarkBlue}{rgb}{0,0.08,0.50}
\definecolor{BrickRed}{rgb}{0.65,0.08,0}

\let\sc\relax
\DeclareMathOperator{\sc}{sc}
\DeclareMathOperator{\expo}{\mathsf{e}}

\title{Martin boundary of killed random walks on isoradial graphs}

\author[C\'edric Boutillier and Kilian Raschel]{C\'edric Boutillier and Kilian Raschel, with an appendix by Alin Bostan}

\thanks{A.\ Bostan: Inria, Universit\'e Paris-Saclay, 1 rue Honor\'e d’Estienne d’Orves, 91120 Palaiseau, France}

\thanks{C.\ Boutillier: Sorbonne Université, CNRS, Laboratoire de Probabilit\'es, Statistique et
Mod\'elisation (UMR 8001), 4 Place Jussieu, F-75005 Paris, France \& Institut Universitaire de France}

\thanks{K.\ Raschel: CNRS \& Institut Denis Poisson (UMR 7013), Universit\'e de Tours et Universit\'e d'Orl\'eans, Parc de Grandmont, 37200 Tours, France}
\thanks{This project has received funding from the European Research Council (ERC) under the European Union's Horizon 2020 research and innovation programme under the Grant Agreement No.\ 759702 and from the ANR DIMERS (ANR-18-CE40-0033).}

\email{alin.bostan@inria.fr}
\email{cedric.boutillier@sorbonne-universite.fr}
\email{raschel@math.cnrs.fr}

\keywords{Martin boundary; Green function; isoradial graphs; killed random walk; discrete exponential function}
\subjclass[2010]{Primary 31C35, 05C81; Secondary 82B20, 60J45}

\date{\today}

\begin{document}

\begin{abstract}
We consider killed planar random walks on isoradial graphs. Contrary to the lattice case, isoradial graphs are not translation invariant, do not admit any group structure and are spatially non-homogeneous. Despite these crucial differences, we compute the asymptotics of the Martin kernel, deduce the Martin boundary and show that it is minimal.
Similar results on the grid $\mathbb Z^d$ are derived in a celebrated work of Ney and Spitzer.
\end{abstract}

\maketitle
\setcounter{tocdepth}{1}
\tableofcontents

\section{Introduction and results}

\subsection{Ney and Spitzer theorems}
\label{subsec:NS}

Let $(Z_n)_{n\geq0}$ be a random walk on $\mathbb Z^d$ with finite support and
non-zero drift. In the celebrated paper \cite{NeSp-66}, Ney and Spitzer derive
the asymptotics of the Green function
\begin{equation}
\label{eq:Green_function_1}
     G(x,y)
     =\sum_{n=0}^\infty \mathbb P_x[Z_n=y]
\end{equation}
as $\vert y\vert\to\infty$ in any given direction $\frac{y}{\vert y\vert}\to
\ps\in\mathbb S^{d-1}$, see \cite[Thm~2.2]{NeSp-66}. Introduce the one-step transition probabilities generating function (with
$\zeta\cdot y$ denoting the standard scalar product of $\zeta$ and $y$ in $\mathbb R^d$)
\begin{equation}
\label{eq:def_phi}
     \phi(\zeta)=\sum_{y\in\mathbb Z^d} p(0,y)e^{\zeta\cdot y},
\end{equation}
defined in terms of the transition kernel $p(x,y)=\mathbb{P}_x[Z_1=y]$ of the random
walk. Ney and Spitzer deduce from their asymptotics of~\eqref{eq:Green_function_1} that
\begin{equation}
\label{eq:asymptotic_Martin_kernel_NS}
\lim_{\substack{\vert y\vert\to\infty\\y/\vert y\vert\to\ps}}\frac{G(x_1,y)}{G(x_0,y)}=e^{\zeta(\ps)\cdot (x_1-x_0)},
\end{equation}
where the mapping $\ps\mapsto\zeta(\ps)$ is proved to define a homeomorphism between the ambient
 sphere $\mathbb S^{d-1}$ and the set
\begin{equation}
\label{eq:def_partial_D}
     \partial D=\{\zeta\in\mathbb R^d: \phi(\zeta)=1\},
\end{equation}
see \cite[Cor.~1.3]{NeSp-66} and~\cite{He-63}; see also~\eqref{eq:grad_p} for an expression of the above homeomorphism. Figure \ref{fig:example_curve} contains a few examples of sets $\partial D$ in \eqref{eq:def_partial_D}.

Introduce now the discrete Laplacian $\Delta=\Id-p$, i.e., for $x\in\mathbb Z^d$,
\begin{equation*}
     \Delta f(x) = \sum_{y\in\mathbb{Z}^d}p(x,y)(f(x)-f(y)).
\end{equation*}
From a potential theory viewpoint it is obvious that the exponential functions
$f(x)=e^{\zeta\cdot x}$ are discrete harmonic (meaning $\Delta f=0$) if and only if $\zeta\in\partial D$.
It is further known~\cite{ChDe-60,DoSnWi-60} that they are (positive)
\emph{minimal}, in the sense that if $h$ is harmonic and satisfies $0\leq h\leq f$ on $\mathbb Z^d$, then $h=c f$ for some $0\leq c\leq1$.

A corollary of Ney and Spitzer
result~\eqref{eq:asymptotic_Martin_kernel_NS} is that the Martin
compactification of the random walk coincides with the geometric
compactification of $\mathbb Z^d$ by the sphere $\mathbb S^{d-1}$ at infinity.
More precisely, any harmonic function $f$ for $(Z_n)_{n\geq0}$ may be written as
$f(x)=\int_{\partial D} e^{\zeta\cdot x} \mu(\ud\zeta)$, where $\mu$ is a positive Radon
measure on $\partial D$, see \cite[Rem.~a]{NeSp-66}. For a general introduction
on Martin boundary see \cite{Ma-41,Do-59,Hu-60,Sa-97,Wo-00}. 

Let us mention a few strongly related results. First, the original Ney and Spitzer
result \eqref{eq:asymptotic_Martin_kernel_NS} does not assume the boundedness of
the support (it is only assumed that every point of $\partial D$ has a
neighborhood in which $\phi$ in~\eqref{eq:def_phi} is finite), but throughout this article we shall
only consider random walks with bounded increments. In absence of drift and if
$d\geq 3$ (so as to have a transient behavior), the Martin boundary is reduced to a
single point, see \cite[Sec.~26]{Sp-64} and \cite[(25.11) and (25.12)]{Wo-00}
for exact statements and proofs.
Let us also point out the concept of $t$-Martin boundary
(which probabilistically corresponds to a random walk with killing), which
involves the Green function
\begin{equation}
\label{eq:Green_function_t}
G(x,y\vert \tfrac{1}{t})=\sum_{n=0}^\infty \mathbb{P}_x[Z_n=y]\frac{1}{t^n}.
\end{equation}
Obviously $G(x,y\vert 1)=G(x,y)$ in our notation \eqref{eq:Green_function_1}. This $t$-Martin boundary is either empty (when $t<\rho$, where $\rho$ is the spectral radius of the transition kernel), reduced to one point ($t=\rho$) or homeomorphic to $\mathbb S^{d-1}$ (when $t>\rho$), see Woess \cite[Sec.~25.B]{Wo-00}. The ratio $G(x_1,y\vert \tfrac{1}{t})/G(x_0,y\vert \tfrac{1}{t})$ converges to the same quantity as in \eqref{eq:asymptotic_Martin_kernel_NS}, but with $\zeta(\ps)\in\partial D_t$, where
\begin{equation}
\label{eq:def_partial_D_t}
     \partial D_t=\{\zeta\in\mathbb R^d: \phi(\zeta)=t\}.
\end{equation}
We finally mention the work \cite{Du-20}, in which Dussaule obtains the Martin boundary of thickenings of $\mathbb Z^d$ (i.e., Cartesian products of $\mathbb Z^d$ by a finite set). In the reference \cite{Ba-88} Babillot considers non-lattice random walks and obtains the Martin boundary of thickenings of $\mathbb R^d$.
\begin{figure}[ht]
\includegraphics[width=0.4\textwidth]{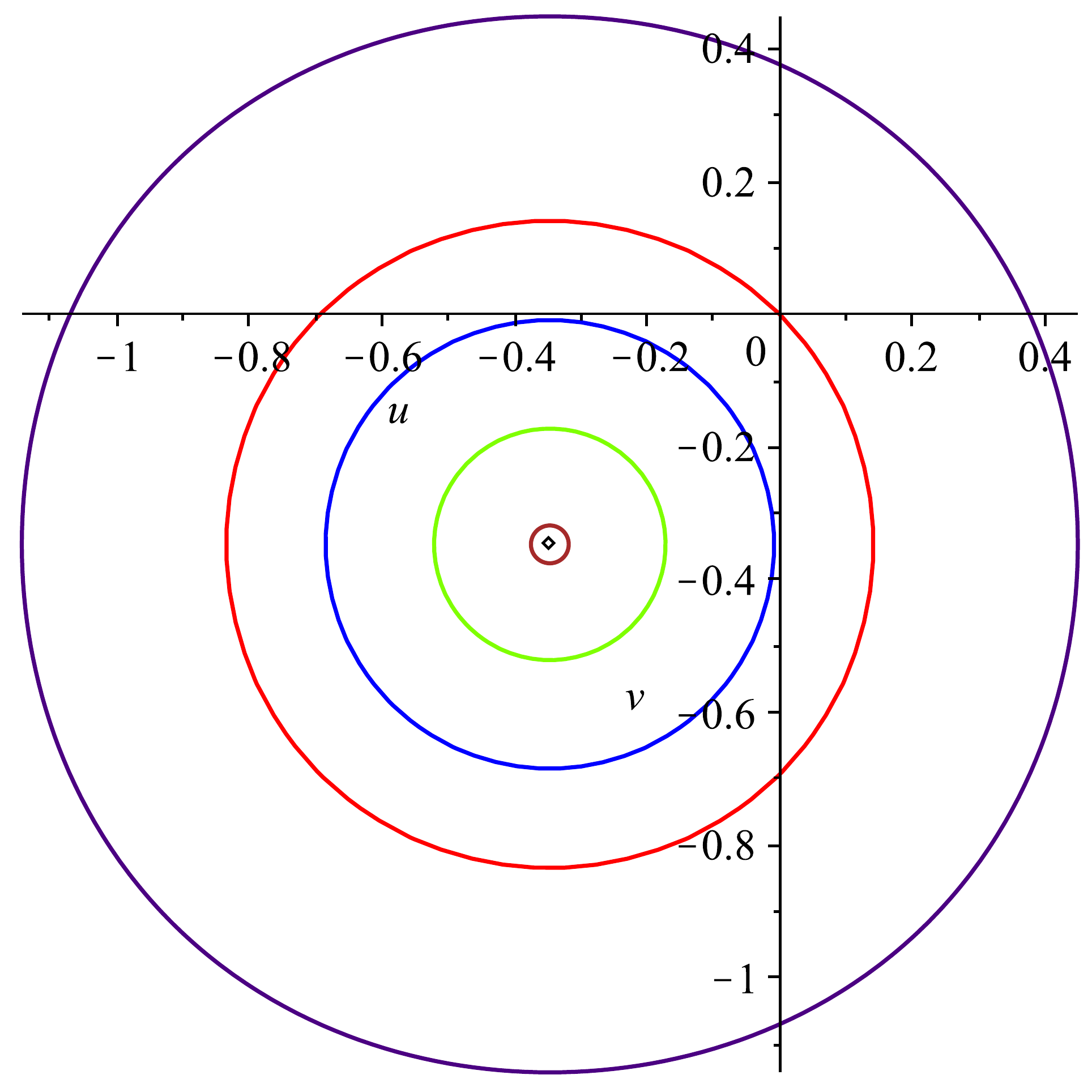}
\caption{The curve $\phi(\zeta_1,\zeta_2)=t$, for
$\phi(\zeta_1,\zeta_2)=\frac{e^{\zeta_1}}{3}+\frac{e^{\zeta_2}}{3}+\frac{e^{-\zeta_1}}{6}+\frac{e^{-\zeta_2}}{6}$
and $t=2\sqrt{2}/3\approx 0.9428,0.943,0.95,0.97,1,1.1$. 
It corresponds to a
nearest neighbor walk in $\mathbb{Z}^2$ with jump probability $\frac{1}{3}$ to
the North and East, and $\frac{1}{6}$ to the South and West. The spectral radius of
the transition kernel is $2\sqrt{2}/3$.}
\label{fig:example_curve}
\end{figure}

\subsection{Spatial inhomogeneity}
A crucial ingredient in the proof of all above results is the spatial homogeneity (or invariance by translation) of $\mathbb Z^d$ and of the random walk, meaning that $p(x,y)=p(0,y-x)$ for all $x,y\in\mathbb Z^d$.

The perturbation of the homogeneity at a finite number of sites of the grid $\mathbb Z^d$ should not affect the structure of the Martin boundary, but already impacts on the expressions for harmonic functions. This is shown by Kurkova and Malyshev \cite[Thm~2.1]{KuMa-98} in the case of planar random walks with jumps to the nearest neighbors; they obtain the asymptotics of the Green function and show that the Martin boundary is homeomorphic to the circle $\mathbb S^1$. On the other hand, in the case of a homogeneity perturbed at an infinite number of points, computing the Green function asymptotics or deriving the Martin boundary seems too difficult in general (even if some upper and lower bounds may exist for the transition probabilities or for the Green functions, see \cite{Mu-06}). 

It is therefore natural to introduce a structure behind the inhomogeneities. A class of such models is composed of random walks killed or reflected when they exit a domain (inhomogeneities then appear on the boundary of the domain). In this context, Kurkova and Malyshev \cite{KuMa-98}, Ignatiouk-Robert \cite{IR-08,IR-10a,IR-10b}, Ignatiouk-Robert and Loree \cite{IRLo-10} obtain the asymptotics of (ratios of) Green functions as in \eqref{eq:asymptotic_Martin_kernel_NS} and derive the Martin boundary when the domain is a quarter plane $\mathbb Z_+^2$ or a half-space $\mathbb Z_+\times \mathbb Z^{d-1}$. See also \cite{Go-04}.

\subsection{The case of isoradial graphs}
In this article we explore another class of models having  inhomogeneities in
infinite number, namely, killed random walks on isoradial graphs. By definition,
isoradial graphs (see Figure~\ref{fig:Iso0} for a few examples) are planar
embedded graphs such that all faces are inscribable in a circle of radius $1$;
more details will be provided in Section~\ref{sec:iso_expo}. So the irregularities of
such graphs are structured, but as it is illustrated by looking at the
pictures, the behavior may be quite wild. In particular, with the exception of a
few simple particular cases (the square lattice $\mathbb Z^2$ or the
triangular lattice), an isoradial graph is not a linear transform of a two-dimensional lattice, is not translation-invariant and can be actually highly
non-homogeneous. Introduced by Duffin \cite{Du-68}, Mercat \cite{Me-01} and Kenyon 
\cite{Ke-02}, these graphs have become
particularly popular, as they are well suited for discrete complex analysis~\cite{ChSm-11}.
Moreover, because the star-triangle transformation (see Figure \ref{fig:triangle_etoile}), which plays an important
role in integrable statistical mechanics, preserves isoradiality, many models of
statistical mechanics are now studied \cite{BodT-12} on isoradial graphs: dimer
model~\cite{Ke-02,deTil:iso}, two-dimensional Ising model~\cite{BodTRa-19}, spanning trees,
bond percolation~\cite{Grimmett-Mano}, random cluster model~\cite{BDCS15,Du-Li-Mano}, etc.

Despite their intrinsic irregularity and the absence of group structure,
isoradial graphs form a broad class of graphs on which one can 
define an exponential function in a uniform way;
this is a first step towards a Ney and Spitzer theorem.
Such an exponential function is only proved to exist when the isoradial graph
$\Gs$ is equipped with certain natural conductances, involving trigonometric or
elliptic functions and geometric angles of the embedding of the graph. The
existence is obtained by Kenyon
\cite{Ke-02} for critical (trigonometric) weights and is extended to the
non-critical (elliptic) case by de
Tili\`ere and the two present authors~\cite{BodTRa-17}. Whereas the classical
exponential function between any two points $x,y\in\mathbb Z^d$ may be written
as $e^{\zeta\cdot (y-x)}$ with a parameter $\zeta\in\mathbb C$, see
\eqref{eq:asymptotic_Martin_kernel_NS}, the exponential function on $\Gs$ takes
the form 
\begin{equation}
\label{eq:intro_expo}
     \expo_{(x,y)}(u\vert k),
\end{equation}
where $x,y\in\Gs$, $k\in(0,1)$ is an
elliptic modulus naturally attached to the model and $u$ lies on the torus $\TT(k)=\mathbb C/(4K(k)\mathbb
Z+4iK'(k)\mathbb Z)$, $K$ and $K'$ denoting the elliptic integrals 
of the first kind and its complementary, respectively, see 
Section~\ref{sec:expo_def} and \cite[Sec.~3.3]{BodTRa-17}. In the same
manner that $f(x)=e^{\zeta\cdot x}$ is harmonic on $\mathbb Z^d$ for $\zeta\in\partial
D$, the exponential function $\expo_{(x,y)}(u\vert k)$ is discrete harmonic as a function of $x$ ($y$ being fixed), in
the sense that its massive Laplacian 
\begin{equation}
\label{eq:Laplacian_operator}
     \Delta f(x)=\sum_{z\sim x} \rho(\overline{\theta}_{xz}\vert k)(f(x)-f(z))+m^{2}(x\vert k)f(x)
\end{equation}
is zero by \cite[Prop.~11]{BodTRa-17}, the conductances $\rho$ and (squared) masses $m^{2}$ being defined in \eqref{eq:conductances} and \eqref{eq:mass}. The work \cite{BodTRa-17} also provides exact and asymptotic expressions for the Green function $G(x,y)$ of the killed random walk described by \eqref{eq:Laplacian_operator}, see in particular \cite[Thm~14]{BodTRa-17}.

\subsection{Main results}
Building on the results of \cite{BodTRa-17}, our main theorem is the asymptotics of the Martin kernel
    \begin{equation}
\label{eq:asymptotic_Martin_kernel_intro}
     \lim_{y \to\infty}\frac{G(x_1,y)}{G(x_0,y)}=\expo_{(x_0,x_1)}(u_0\vert k),
\end{equation} 
where the exponential function $\expo_{(x_0,x_1)}$ in \eqref{eq:intro_expo} is evaluated at a point $u_0\in2iK'+\mathbb R$ which depends on the way that $y$ goes to infinity in the graph. See Theorem~\ref{thm:main-1} for the precise statement. Let us present the main features of \eqref{eq:asymptotic_Martin_kernel_intro}:
\begin{itemize}
     \item First, it gives the limit of the ratio of the Green function in any asymptotic direction in the graph. It also proves that the Martin boundary is homeomorphic to the sphere $\mathbb S^1$, providing the announced generalization of Ney and Spitzer result \eqref{eq:asymptotic_Martin_kernel_NS} on isoradial graphs. The latter result follows from showing that $u_0$ describes a circle in the torus, as the asymptotic direction varies along all possible directions in the graph (see our Theorem~\ref{thm:main-2}).
     \item Theorem~\ref{thm:main-3} shows that for all $u_0\in2iK'+\mathbb R$ and $y\in\Gs$, $\expo_{(x,y)}(u_0\vert k)$ defines a minimal positive harmonic function (see Section \ref{subsec:NS} for the definition), thereby proving that the Martin boundary is minimal. Recall that positive harmonic functions are particularly important in potential theory, as they allow, via the Doob transformation, to construct conditioned processes (for example going to infinity along a given direction in the graph).
     \item In the few particular cases which are both isoradial graphs and lattices (namely, the square and triangular lattices), we show that our results coincide with the classical Ney and Spitzer theorem and we unify the two points of view.     \item Beyond the square and triangular lattices, our results in particular
       apply to periodic isoradial graphs, which are constructed as
       lifts to the plane of a fundamental pattern on the torus. We prove that
       some tools developed
       for their study (characteristic polynomial, amoeba, quantity $u_0$ in
       \eqref{eq:asymptotic_Martin_kernel_intro}, etc.)\ naturally correspond
       with objects in $\mathbb Z^d$ (generating function of the transition
       probabilities, set $\partial D_t$ in \eqref{eq:def_partial_D_t}, mapping
       $\zeta(\ps)$ in \eqref{eq:asymptotic_Martin_kernel_NS}, etc.).
\end{itemize}
The paper is organized as follows. Section~\ref{sec:iso_expo} presents some
crucial definitions and properties of isoradial graphs, Green functions and
discrete exponential functions. Section~\ref{sec:main_result} contains the
statements of the main results and their proofs.
After gathering and reinterpreting existing results on general periodic planar
graphs,
Section~\ref{sec:periodic_case} presents a refined study of the case of periodic isoradial graphs. Section~\ref{sec:minimal_harmonic_functions} focusses on minimal harmonic functions.

\subsection*{Acknowledgments} We warmly thank B\'eatrice de Tili\`ere for
interesting discussions.
We also thank an anonymous referee for his/her reading and comments.

\section{Isoradial graphs, Green functions and exponential functions}
\label{sec:iso_expo}

\subsection{Isoradial graphs}
\label{sec:iso_def}

Isoradial graphs, named after~\cite{Ke-02}, are defined as follows\footnote{The
presentation of Section \ref{sec:iso_def} follows that of
\cite[Sec.~2]{BodTRa-17}.}.  An embedded planar graph $\Gs=(\Vs,\Es)$ is
\emph{isoradial} if all internal faces are inscribable in a circle, with all circles
having the same radius (fixed for example to $1$), and such that all circumcenters are in the interior of
the faces. Examples are provided in Figures~\ref{fig:Iso0} and \ref{fig:waves}. 

Let $\Gs$ be an infinite, isoradial graph,
  whose bounded faces fill the whole plane.
Then the dual graph $\Gs^*$,
 embedded by placing dual vertices at the circumcenters of the corresponding
 faces, is also isoradial.

\begin{figure}[ht!]
    \begin{tabular}{cc}
      \includegraphics[width=6cm,height=7.2cm]{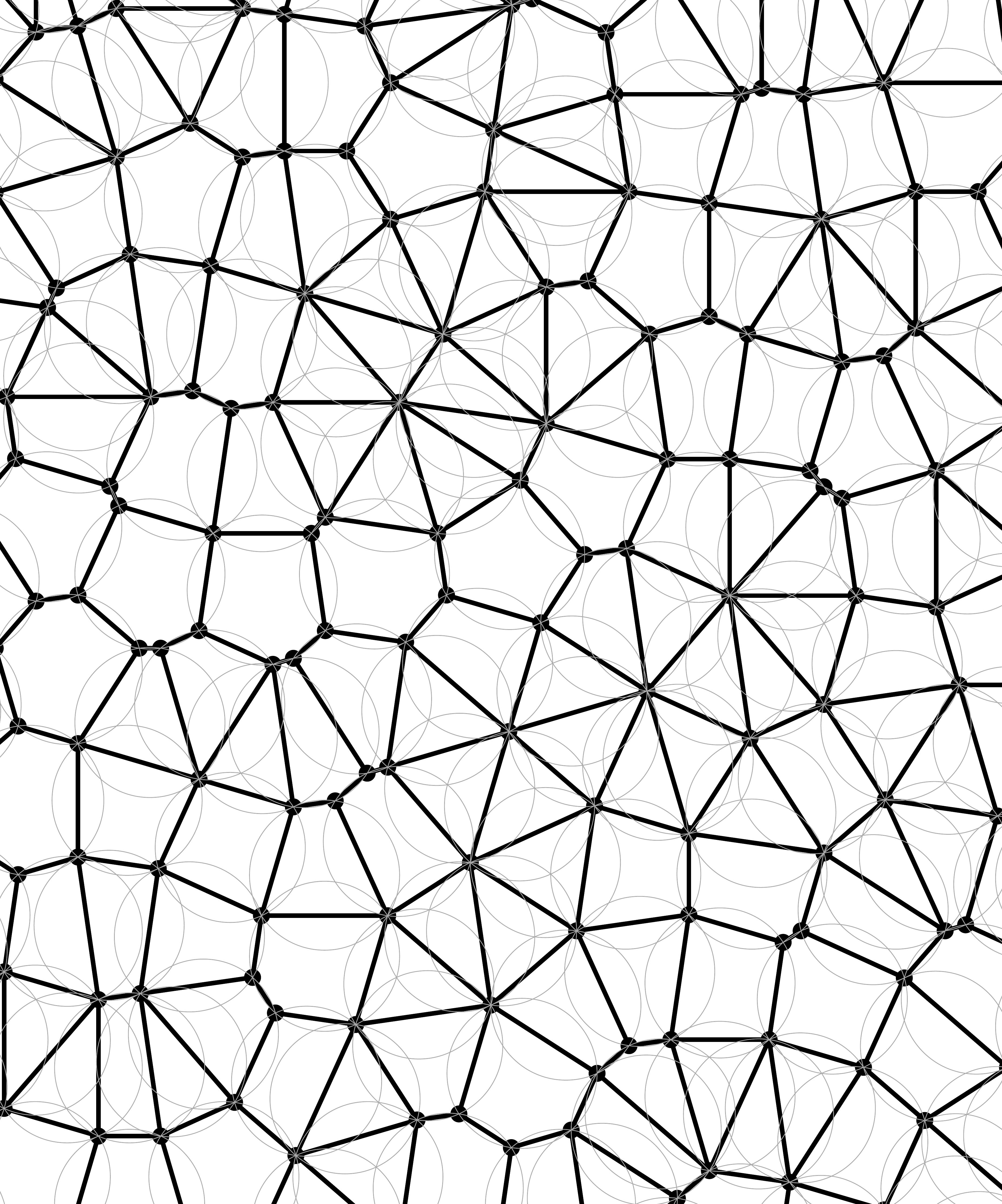} &
      \includegraphics[width=6cm,height=7.2cm]{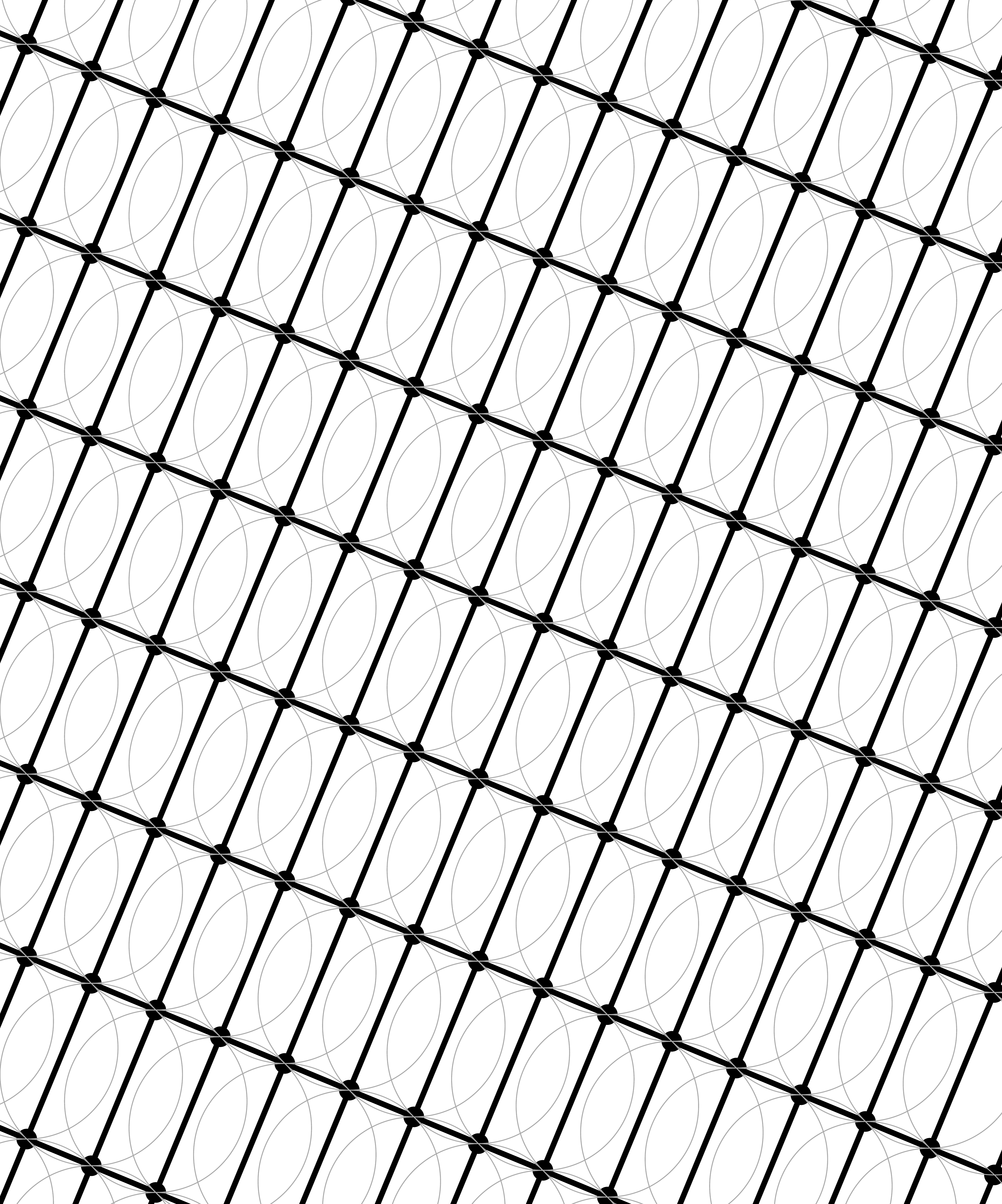} \\
      \includegraphics[width=6cm,height=7.2cm]{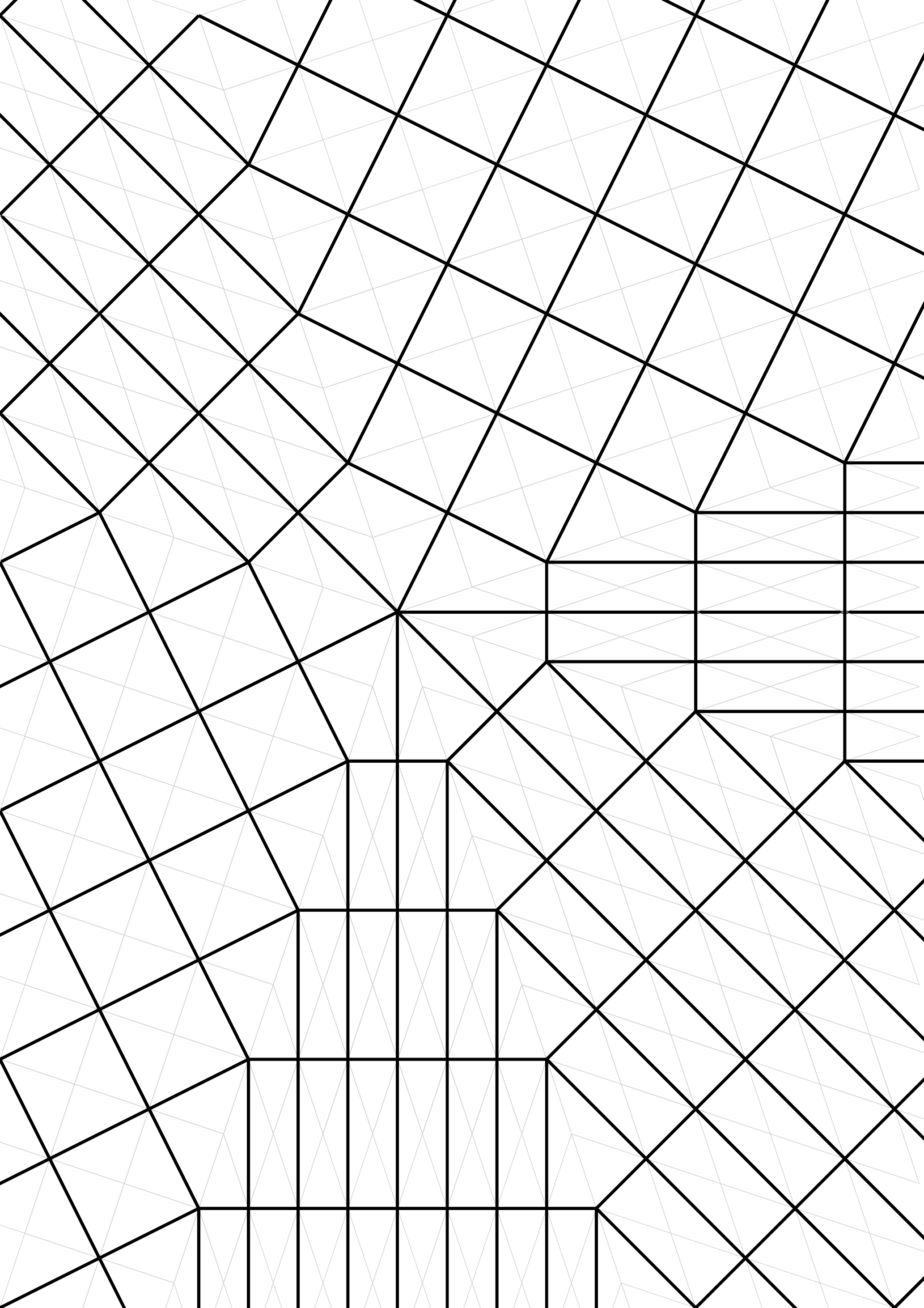} &
      \includegraphics[width=6cm,height=7.2cm]{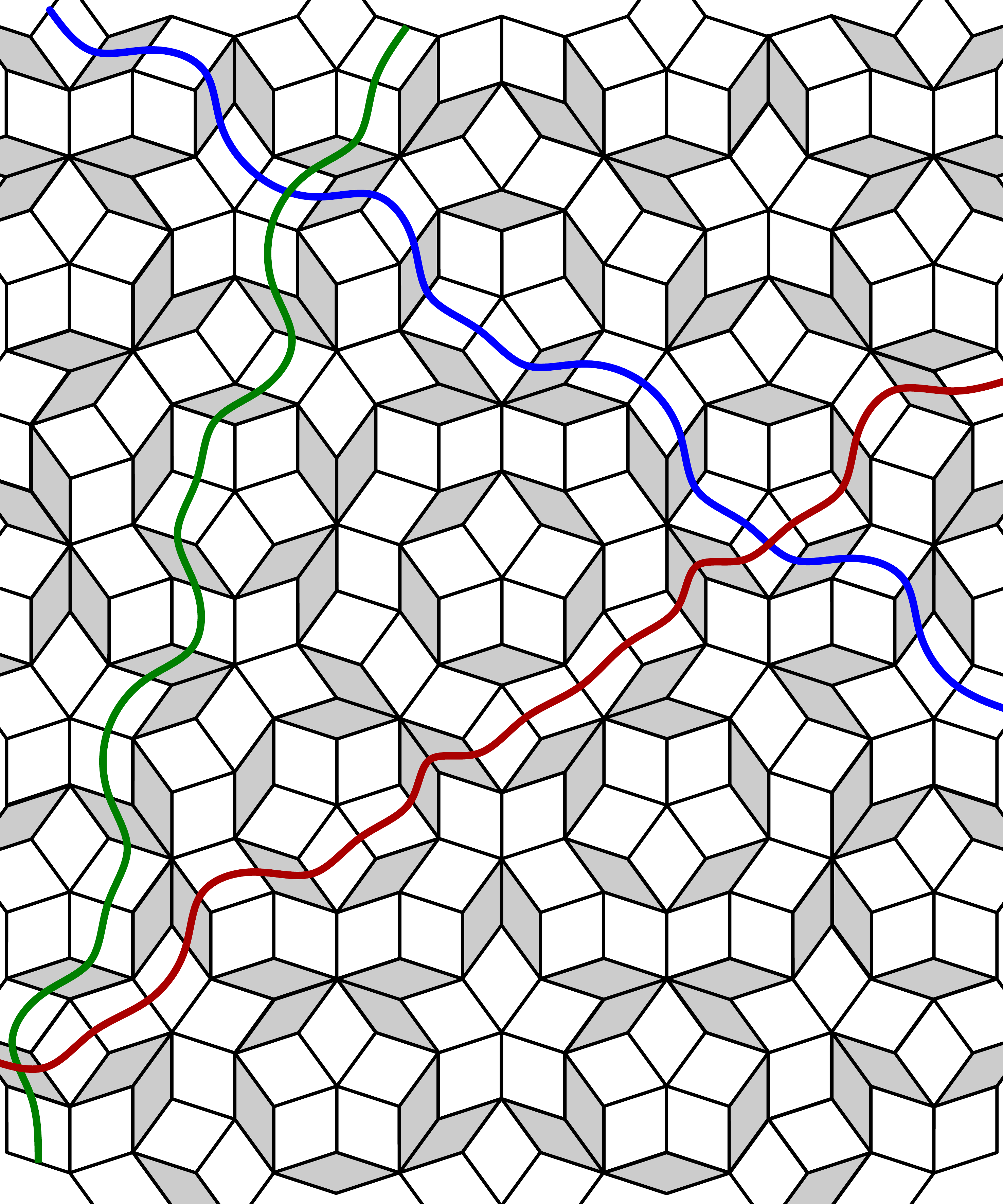}
    \end{tabular}
    \caption{Upper left: piece of an infinite graph $\Gs$ isoradially embedded in the plane
      with the circumcircles of the faces.
      Upper right: a periodic isoradial embedding of the square lattice.
    Lower left: an isoradial graph with multiple cone-like regions (see Section \ref{sec:cones} for
    a related discussion).
    Lower right: a piece of
    the Penrose tiling with rhombi, which can be used as the diamond
  graph of an isoradial graph, and three train-tracks, as bi-infinite paths in
  the dual.}
    \label{fig:Iso0}
\end{figure}

\begin{figure}
\begin{center}
\resizebox{0.18\textwidth}{!}{\input 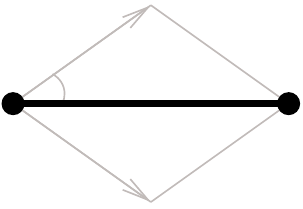_t}
\end{center}
\caption{An edge $e$ of $\Gs$ is the diagonal of a rhombus of $\GR$, defining the angle $\overline{\theta}_e$ and the rhombus vectors
$e^{i\overline{\alpha}_e}$ and $e^{i\overline{\beta}_e}$. One has $\frac{\overline{\beta}_e-\overline{\alpha}_e}{2}=\overline{\theta}_e$.}
\label{fig:rhombus_angle}
\end{figure}

The \emph{diamond graph}, denoted $\GR$, is constructed from
an isoradial graph $\Gs$ and its dual $\Gs^*$.
Vertices of $\GR$ are those of~$\Gs$ and those of $\Gs^*$. A dual vertex of $\Gs^*$ is joined to all primal
vertices on the boundary of the corresponding face. Since edges of the diamond graph $\GR$ are radii of circles, 
they all have length $1$ and can be assigned a direction $\pm e^{i\overline{\alpha}}$. Note that faces of $\GR$ are 
side-length $1$ rhombi.

A \emph{train-track} is a bi-infinite path on the dual of the diamond graph
$(\GR)^*$ which does not turn: when entering in a rhombus, it exits through the
opposite edge. See Figure~\ref{fig:Iso0}. A train-track $T$ thus crosses edges with the same direction
$\pm e^{i\overline{\alpha}}$. We then say that the angle $\overline{\alpha}$ is
\emph{associated} to the train-track. This angle is a priori well defined only
modulo $\pi$. This ambiguity can be lifted by considering an orientation
for the train-track and choosing a convention (i.e., the vector
$e^{i\overline{\alpha}}$ crosses the oriented train-track from left to right),
and then, the angle associated to the train-track oriented in the other direction is
$\overline{\alpha}+\pi$.

A \emph{minimal path} between two vertices $x$ and $y$
is a shortest path in $\GR$ between these vertices.
It is not unique, but the
set of unit steps used is the same for all minimal paths.

Using the diamond graph, angles can naturally be assigned to edges of the graph~$\Gs$ as follows.
Every edge $e$ of $\Gs$ is the diagonal of exactly one
rhombus of $\GR$, and we let $\overline{\theta}_e$ be the half-angle at
the vertex it has in common with $e$, see Figure~\ref{fig:rhombus_angle}.
Note that we have $\overline{\theta}_e\in(0,\frac{\pi}{2})$, because circumcircles are assumed to be in the interior of the faces.
From now on, we actually ask more and suppose that there exists $\epsilon>0$ 
such that $\overline{\theta}_e\in(\epsilon,\frac{\pi}{2}-\epsilon)$.
We also assign two rhombus vectors to the edge $e$, denoted 
$e^{i\overline{\alpha}_e}$ and $e^{i\overline{\beta}_e}$, see Figure~\ref{fig:rhombus_angle}, and
we assume that $\overline{\alpha}_e,\overline{\beta}_e$ satisfy
$\frac{\overline{\beta}_e-\overline{\alpha}_e}{2}=\overline{\theta}_e$.

An isoradial graph $\Gs$ is said to be \emph{quasicrystalline} if the number $d$ of
possible directions assigned to edges of its diamond graph $\GR$ is finite.

\subsection{Monotone surfaces and asymptotically flat isoradial graphs}
\label{subsec:monotone}

In the quasicrystalline case, the diamond graph can be seen as the
projection of a monotone surface in $\mathbb{Z}^d$ onto the plane, and every edge
of $\GR$ is the projection of a unit vector of $\mathbb{Z}^d$, see \cite{dB-81,BoSu-08}. 
Vertices (resp.\ faces) of $\Gs$ correspond then to even (resp.\ odd) vertices of
$\mathbb{Z}^d$ on that surface.
For example, in the particular case when $d=3$, the diamond graph appears
naturally to the eye as the projection in the plane of a landscape made of unit
cubes, see for example Figure~\ref{fig:waves}. Also, the Penrose tiling (see
Figure~\ref{fig:Iso0},
lower right) is the projection of a monotone surface very close to a given plane with
irrational slope in $\mathbb{Z}^5$~\cite{dB-81}.

Let $(e_1,e_2,\ldots, e_d)$ be the canonical basis of $\mathbb{Z}^d$ and
$\overline{\alpha}_1, \overline{\alpha}_2,\ldots,\overline{\alpha}_d$ be the
angles
associated to
the train-tracks such that $e^{i\overline{\alpha}_j}$ is the projection of $e_j$, and
\begin{equation}
\label{eq:ordering_angles}
\overline{\alpha}_1 < \overline{\alpha}_2 < \cdots <
\overline{\alpha}_d < \overline{\alpha}_1+\pi < \cdots < \overline{\alpha}_d+\pi <
  \overline{\alpha}_1+2\pi.
\end{equation}

Using this notion of surface and the notation above, we can give a sense to the
difference of two vertices $y-x$ as the vector $\sum_{j=1}^d N_j e_j$ in $\ZZ^d$
joining the corresponding points on the surface. The quantity
\begin{equation*}
N=\vert y-x\vert=\sum_{j=1}^d \vert N_j\vert
\end{equation*} is the graph distance between
$x$ and $y$ seen as vertices of $\GR$, as well as the graph distance in $\ZZ^d$ between the corresponding points on the monotone surface.
The \emph{reduced coordinates} of $y-x$
are the quantities $n_j=\frac{N_j}{N}$, which define a point
$\pps=\sum_{j=1}^{d} n_j e_j$ in the $L^1$-unit ball of
$\mathbb{R}^d$.

Monotone surfaces can be wild in the following sense: fix a reference
vertex $x_0$ and a direction $\ps\in\mathbb S^1$, and look at the
reduced coordinates $n_j$ in a minimal path from
$x_0$ to $y$, as $y$ tends to infinity in the direction $\ps\in\mathbb S^1$ in the
plane embedding. These quantities may not converge. This is what happens for
example on Figure~\ref{fig:waves}, because of the larger and larger ``waves''
causing oscillations for the $n_j$'s. 

We say that the isoradial graph
is \emph{asymptotically flat} if this does not happen, and if proportions
converge for each directional limit. This property,
and the values of the limits when they exist,
do not depend on the choice
of $x_0$. Periodic isoradial graphs and multiple cone-like regions (see
Figure~\ref{fig:Iso0} and Section~\ref{sec:cones}) are asymptotically flat.
Let us call $n_j(\ps)$
the limits
for $j=1,\ldots,d$
of the reduced coordinates in the direction $\ps$.
In that case, the function
\begin{equation*}
  \ps\mapsto \vec{n}(\ps)= (n_j(\ps))_{1\leq j\leq d}
\end{equation*}
is automatically continuous and injective, as the map from the diamond graph
to the monotone surface in $\mathbb{Z}^d$ is a bi-Lipschitz bijection.

\subsection{Exponential functions}
\label{sec:expo_def}
We will be using definitions and results on the $Z$-invariant massive Laplacian $\Delta:\CC^\Vs\rightarrow\CC^\Vs$ introduced in~\cite{BodTRa-17}\footnote{The presentation of Section \ref{sec:expo_def} follows that of \cite[Sec.~3]{BodTRa-17}.}. 
Let $\xb$ be a vertex of $\Gs$ of degree $n$; denote by $e_1,\dots,e_n$ the edges incident to $\xb$ and by
$\overline{\theta}_1,\dots,\overline{\theta}_n$ the corresponding rhombus half-angles, then the Laplacian operator is given by \eqref{eq:Laplacian_operator} (see \cite[Eq.~(1)]{BodTRa-17} for the original statement),
where the conductances $\rho$ and (squared) masses $m^{2}$ are defined by
\begin{align}\label{eq:conductances}
     \rho_e=\rho(\overline{\theta}_e\vert k)&=\sc(\theta_e\vert k),\\
     m^2(\xb\vert k)&=\sum_{j=1}^{n}(\Arm (\theta_j\vert k)-\sc(\theta_{j}\vert k)),\label{eq:mass}
\end{align}
where $\sc$ is one of the twelve Jacobi elliptic functions (see \cite[Chap.~16]{AbSt-64} or \cite[Chap.~2]{La-89} for 
extensive presentations,
or \cite[App.~A]{BodTRa-17} for a selection of key properties) and
\begin{equation*}
     \Arm (u\vert k)=
     \frac{1}{k'}\Bigl(
     \Dc(u\vert k)+\frac{E-K}{K}u
     \Bigr),
\end{equation*}
where $k'=\sqrt{1-k^2}$ is the complementary elliptic modulus, $\Dc(u\vert
k)=\int_{0}^u \dc^2(v\vert k)\ud v$, and $E=E(k)$ is the complete elliptic
integral of the second kind. Since $k$ is fixed once and for all, most of the time we will drop the elliptic modulus from our notation, writing for example $\Arm (u)$ instead of $\Arm (u\vert k)$.

We also need the definition of the complex-valued
\emph{discrete $k$-massive exponential function} $\expo_{(x,y)}(u)$
of \cite[Sec.~3.3]{BodTRa-17}, briefly introduced in \eqref{eq:intro_expo},
depending on a pair of vertices $(x,y)$ and of a complex parameter $u$.
Consider an edge-path
$\xb=\xb_1,\dotsc,\xb_n=\yb$ of the diamond graph $\GR$ from $\xb$ to $\yb$
and let
$e^{i\overline{\alpha}_j}$ be the vector corresponding to the edge
$\xb_j\xb_{j+1}$. Then the exponential function is defined inductively 
along the edges of the path:
\begin{align}
\label{eq:recursive_def_expo}
\forall\,u\in\CC,\quad
\expo_{(\xb_j,\xb_{j+1})}(u) &= i \sqrt{k'}\sc\Bigl(\frac{u-{\alpha_j}}{2}\Bigr),\nonumber\\
\expo_{(\xb,\yb)}(u)         &= \prod_{j=1}^{n-1} \expo_{(\xb_j,\xb_{j+1})}(u),
\end{align}
where $\alpha_j=\overline{\alpha}_j\frac{2K}{\pi}$.
These functions do not depend on the path chosen for their definition,
and are harmonic for the
Laplacian~\eqref{eq:Laplacian_operator}, see~\cite[Prop.~11]{BodTRa-17}.

\subsection{Exact and asymptotic expressions for the Green function}
\label{subsec:ex_as_ex}

The massive Green function, denoted $G$, is the inverse of the massive Laplacian
operator~\eqref{eq:Laplacian_operator}. The following formula is proved in
\cite[Thm~12]{BodTRa-17}:
\begin{equation}
\label{eq:def_Green_function}
     G(\xb,\yb) =\frac{k'}{4i\pi} \int_{\Gamma_{\xb,\yb}} \expo_{(\xb,\yb)}(u) \ud u,
\end{equation}
where $\Gamma_{\xb,\yb}$ is a vertical contour on the torus 
$\TT(k)$, whose abscissa can be identified with the angle of the ray $\RR\overrightarrow{\xb\yb}$.
Let us remark that the  discrete massive exponential function $\expo_{(\xb,\yb)}(u)$ in \eqref{eq:recursive_def_expo} and \eqref{eq:def_Green_function} is defined using a path of the embedded graph from $\xb$ to $\yb$. This implies that the expression \eqref{eq:def_Green_function} for $G(\xb,\yb)$ is local, meaning that it remains unchanged if the isoradial graph $\Gs$ is modified away from a path from $\xb$ to $\yb$. This is far from being a general situation: when computing the inverse of a discrete operator, one expects the geometry of the whole graph to be involved. The idea of the proof of the local formula \eqref{eq:def_Green_function} is the following:  find a one-parameter family of local, complex-valued functions in the kernel of the massive Laplacian \eqref{eq:Laplacian_operator} (the exponential functions), define its inverse (minus the Green function) as a contour integral of these functions, and adjust (if possible) the contour of integration in such a way that $\Delta G=-\Id$.

Let us also state \cite[Thm 14]{BodTRa-17}, which contains
the asymptotic behavior of the Green function. Let $\Gs$ be a
quasicrystalline isoradial graph. Let $\chi$ the function defined (for $\xb$ and
$\yb$ fixed) by
\begin{equation}
\label{eq:def_chi}
     \chi(v)=\chi(v\vert k) = \frac{1}{\vert \xb-\yb\vert}\log \expo_{(\xb,\yb)}(v+2iK'),
\end{equation}
where $\vert \xb-\yb\vert$ is the graph distance between $x$ and $y$.

By \cite[Lem.~17]{BodT-11} the set
of zeros of
$\expo_{(\xb,\yb)}(u)$ is contained in an interval of length $2K-2\epsilon$, for
some $\epsilon>0$. Let us denote by $\alpha$ the midpoint of this interval. When
the distance $\vert \xb_0-\yb\vert$ between vertices $\xb_0$ and $\yb$ of $\Gs$ is large, we have
\begin{equation}
\label{eq:asymptotics_BodTRa-17}
     G(\xb_0,\yb)=\frac{k' \expo_{(x_0,y)}(2iK'+v_0)}{2\sqrt{2\pi \vert \xb_0-\yb\vert \chi''(v_0)}} (1+o(1)),
\end{equation}
where $v_0$ is the unique $v\in\alpha +(-K+\varepsilon,K-\varepsilon)$ such that $\chi'(v)=0$, see \cite[Lem.~15]{BodTRa-17}, and $\chi(v_0)<0$. 
Let us briefly recall the main idea in \cite{BodTRa-17} to derive the asymptotics \eqref{eq:asymptotics_BodTRa-17}. Starting from \eqref{eq:def_chi}, we may reformulate \eqref{eq:def_Green_function} as
\begin{equation*}
     G(\xb_0,\yb) =\frac{k'}{4i\pi} \int_{\Gamma_{\xb_0,\yb}} e^{\vert \xb_0-\yb\vert \chi(v)} \ud v.
\end{equation*}
Typically, as $\vert \xb_0-\yb\vert\to\infty$, one may analyse the above integral using the saddle-point method. Classically, the saddle point is a critical point, therefore $\chi'(v)=0$.

\subsection{Going to infinity in an isoradial graph and 3D-consistency} 
\label{subsec:going_to_infinity}

Let us first recall that by construction \cite{Do-59,Hu-60,Sa-97,Wo-00}, a sequence of points $y$ 
converges to a point in the Martin boundary if it exits any finite subset of the graph and if the ratio of Green functions
\begin{equation*}
     \frac{G(x_1,y)}{G(x_0,y)}
\end{equation*}
converges pointwise. While \eqref{eq:asymptotics_BodTRa-17} provides the main
term in the Green function asymptotics and is independent on how $y$ goes to
infinity in the graph, we thus need here a different, somehow more precise
information. 

In the classical Ney and Spitzer theorem \eqref{eq:asymptotic_Martin_kernel_NS},
$y$ goes to infinity along an angular direction, namely $\frac{y}{\vert
y\vert}\to \ps\in\mathbb S^1$. However, this simple geometric description will
not work in the isoradial setting. Indeed, it can be shown (see the proof of
\cite[Thm~14]{BodTRa-17}) that the convergence of $v_0$ appearing in
\eqref{eq:asymptotics_BodTRa-17} is equivalent to the convergence of the
reduced coordinates, which as already discussed (see Section \ref{subsec:monotone} and Figure~\ref{fig:waves}) is not guaranteed by the angular convergence of $y$.

In our opinion, the best way to characterize the convergence of the Martin 
kernel is to use the concept of \emph{3D-consistency}~\cite{BoSu-08}, that we now introduce.
The notion of harmonicity for our massive Laplacian \eqref{eq:Laplacian_operator} 
is compatible with the star-triangle transformation (see
Figure~\ref{fig:triangle_etoile}), which corresponds in the monotone surface
picture to pushing the surface along a 3-dimensional cube:
let $\Gs_Y$ and $\Gs_\nabla$ be two isoradial graphs differing by a star-triangle
transformation, such that $\Gs_Y$ has an extra vertex $x_0$ of degree 3.
If $f$ is a harmonic function on $\Gs_\nabla$, there is a unique way to
extend it at $x_0$ to make it harmonic on $\Gs_Y$. This operation,
together with its inverse corresponding to forgetting the value at $x_0$,
realize
a bijection between harmonic functions 
on $\Gs_Y$ and $\Gs_\nabla$; see~\cite[Prop.~8]{BodTRa-17}.

\begin{figure}
  \centering
  \includegraphics[angle=180,width=6cm]{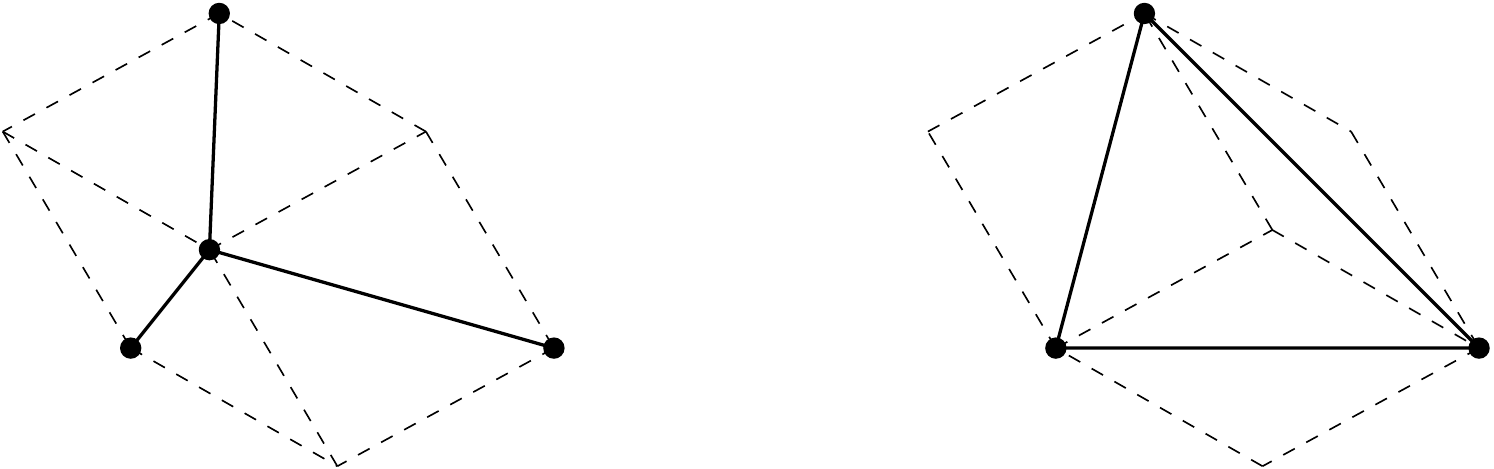}
  \caption{A star-triangle transformation on isoradial graphs. $\Gs_\nabla$ is on the
    left, $G_Y$ on the right.}
    \label{fig:triangle_etoile}
\end{figure}

Using the 3D-consistency and the interpretation of quasicrystalline isoradial graphs as monotone surfaces 
on $\mathbb Z^d$ (Section \ref{subsec:monotone}), the exponential function and the Green function can
be uniquely lifted to
vertices of $\mathbb{Z}^d$, keeping their harmonicity properties.
More precisely, if we assume that the isoradial graph $\mathsf{G}$ has enough
train-tracks associated to each direction of $\mathbb{Z}^d$, namely that
each coordinate on the monotone surface is not bounded either from above or
below\footnote{%
  With
  the $n_j$'s denoting the reduced coordinates of $y-x$, where $x$ is some reference
  vertex, 
  this is equivalent to asking that for each $j$ and $\pm$, there is at least a
  direction $\ps$  along which $\limsup \pm n_j$ is
  strictly positive as $y$ tends to infinity along $\ps$, or in the
  planar embedding picture, asking that for any train-track $T$ associated to
  a direction $\alpha$, the two half-planes obtained by cutting along the
  parallel edges of $T$ have both an infinite number of train-tracks with this
  direction $\alpha$.
}, one can generate any monotone surface of $\mathbb{Z}^d$ from $\mathsf{G}$ by
a (maybe infinite) number of star-triangle transformations, and can extend the
exponential and Green functions to all even vertices of $\mathbb{Z}^d$.
Otherwise, one may get only
half-spaces or slabs~\cite{BoFeRe}, but which are bi-infinite in $d'\geq 2$ directions.
 We will often assume that we are in the first situation, in which $\Gs$ spans all
 $\mathbb{Z}^d$. To extend our results to the second case, replacing $d$ by $d'$
 in the proofs will often be enough.

This lift to a lattice of higher dimension allows us to reformulate the
asymptotics of the Green function as a limit as $y$ tends to infinity in a
certain direction, thereby showing the strong parallel with the classical statement of Ney and Spitzer theorem.
Indeed, in $\mathbb{Z}^d$ the
convergence of $y$ along a direction is equivalent to the convergence of the
reduced coordinates. These directions to infinity may not be obtained by staying
on the surface corresponding to the graph $\Gs$ (unless for some directions, if
the graph is asymptotically flat).

In the rest of the manuscript, we will always use this notion of convergence
when we will compute the asymptotics of ratios of Green functions.

\section{Main results and proofs}
\label{sec:main_result}

\subsection{Statements}

\begin{thm}[asymptotics of the Martin kernel]
\label{thm:main-1}
Let $G$ be the massive Green function on a quasicrystalline isoradial graph,
which can be lifted to $\mathbb{Z}^d$ via the star-triangle transformation.
Then 
\begin{equation}
\label{eq:asymptotic_Martin_kernel}
     \lim_{\yb \to\infty}\frac{G(\xb_1,\yb)}{G(\xb_0,\yb)}=\expo_{(\xb_0,\xb_1)}(u_0),
\end{equation}
where $u_0=2iK'+v_0$, with $v_0$ defined as in~\eqref{eq:asymptotics_BodTRa-17}.
\end{thm}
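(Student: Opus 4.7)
\medskip

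\noindent\textbf{Proof plan.} The strategy is to apply the Green function asymptotics \eqref{eq:asymptotics_BodTRa-17} separately to numerator and denominator and then collapse the resulting ratio using the multiplicative (concatenation) property of the discrete exponential function. Writing $\chi_i$ for the function~\eqref{eq:def_chi} associated to the pair $(x_i,y)$ and $v_0^{(i)}$ for the corresponding saddle point, \eqref{eq:asymptotics_BodTRa-17} gives
$$\frac{G(x_1,y)}{G(x_0,y)} = \frac{\expo_{(x_1,y)}(2iK'+v_0^{(1)})}{\expo_{(x_0,y)}(2iK'+v_0^{(0)})} \cdot \sqrt{\frac{|x_0-y|\,\chi_0''(v_0^{(0)})}{|x_1-y|\,\chi_1''(v_0^{(1)})}} \cdot (1+o(1)).$$

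The first step is to compare the two saddle-point problems. The concatenation identity $\expo_{(x_0,y)}(u) = \expo_{(x_0,x_1)}(u)\,\expo_{(x_1,y)}(u)$ translates into
$$|x_0-y|\,\chi_0(v) - |x_1-y|\,\chi_1(v) = \log \expo_{(x_0,x_1)}(2iK'+v),$$
a quantity bounded uniformly in $y$ on a fixed neighborhood of the saddle point (where the exponential has no zero by the midpoint property recalled before~\eqref{eq:asymptotics_BodTRa-17}). Combined with $||x_1-y|-|x_0-y||\leq |x_0-x_1|=O(1)$, this yields $\chi_0-\chi_1 = O(1/|y|)$ together with all its derivatives. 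Applying the implicit function theorem to $\chi_i'(v)=0$ then gives $v_0^{(0)} - v_0^{(1)} = O(1/|y|)$, and both saddle points converge to a common limit $v_0$ depending only on the asymptotic direction of $y$ --- precisely the framework of Section~\ref{subsec:going_to_infinity}, where convergence of reduced coordinates is imposed. As a byproduct, $|x_1-y|/|x_0-y|\to 1$ and $\chi_i''(v_0^{(i)})$ converges to the same limit, so the square-root factor tends to $1$.

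The second step is to treat the exponential ratio by inserting a pivot:
$$\frac{\expo_{(x_1,y)}(2iK'+v_0^{(1)})}{\expo_{(x_0,y)}(2iK'+v_0^{(0)})}
= \frac{\expo_{(x_1,y)}(2iK'+v_0^{(1)})}{\expo_{(x_1,y)}(2iK'+v_0^{(0)})} \cdot \frac{\expo_{(x_1,y)}(2iK'+v_0^{(0)})}{\expo_{(x_0,y)}(2iK'+v_0^{(0)})}.$$
The right-hand factor collapses via concatenation to a quantity depending only on $x_0$, $x_1$ and $u:=2iK'+v_0^{(0)}$, which by continuity converges to the claimed limit $\expo_{(x_0,x_1)}(u_0)$ (modulo the convention fixing the direction of the path in the definition~\eqref{eq:recursive_def_expo}). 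The left-hand factor equals $\exp\bigl(|x_1-y|\,(\chi_1(v_0^{(1)})-\chi_1(v_0^{(0)}))\bigr)$; since $v_0^{(1)}$ is a critical point of $\chi_1$ and $v_0^{(0)}-v_0^{(1)}=O(1/|y|)$, a second order Taylor expansion shows the exponent to be at most $|x_1-y|\cdot O(1/|y|^2) = O(1/|y|) \to 0$, so this factor tends to $1$.

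The main obstacle is the precision required in the comparison of the two saddle points: a bound of only $o(1)$ on $v_0^{(0)}-v_0^{(1)}$ would not suffice, as it would not survive multiplication by $|x_1-y|\asymp|y|$ in the exponent of factor (A). A more conceptual difficulty, specific to the isoradial setting, is to make sense of ``$y\to\infty$'' in a non-translation-invariant, possibly non-asymptotically-flat graph: this is resolved by lifting $y$ through 3D-consistency to $\mathbb Z^d$, as explained in Section~\ref{subsec:going_to_infinity}, and requiring convergence of reduced coordinates, under which both $v_0^{(0)}$ and $v_0^{(1)}$ automatically converge and $u_0 = 2iK'+v_0$ is well defined as a function of the asymptotic direction.
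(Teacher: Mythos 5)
Your proof is correct, and it takes a mildly but genuinely different route from the paper's. The paper factors the exponential \emph{inside} the contour integral, writing $G(\xb_1,\yb)=\frac{k'}{4i\pi}\int_{\Gamma}\expo_{(\xb_1,\xb_0)}(u)\,\expo_{(\xb_0,\yb)}(u)\,\ud u$, and then runs a \emph{single} saddle-point analysis for the phase $\chi$ attached to the pair $(\xb_0,\yb)$: the factor $\expo_{(\xb_1,\xb_0)}(u)$ is a smooth, non-vanishing, $\yb$-independent prefactor which the Laplace method simply evaluates at the saddle $u_0$, so only one critical point ever appears. You instead invoke \eqref{eq:asymptotics_BodTRa-17} twice as a black box, which forces you to compare two a priori distinct saddle points; your chain of estimates ($|\xb_0-\yb|\chi_0-|\xb_1-\yb|\chi_1=\log\expo_{(\xb_0,\xb_1)}(2iK'+\cdot)=O(1)$, hence $\chi_0-\chi_1=O(1/|\yb|)$ with derivatives by analyticity, hence $v_0^{(0)}-v_0^{(1)}=O(1/|\yb|)$ by non-degeneracy of the critical point, hence an exponential discrepancy $|\xb_1-\yb|\cdot O(1/|\yb|^2)\to 0$) is exactly the right precision, and you correctly flag that a mere $o(1)$ comparison of the saddles would not close the argument. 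Both proofs rest on the same structural inputs — multiplicativity of the exponential function and the Section~\ref{subsec:going_to_infinity} notion of $\yb\to\infty$ — so the difference is one of bookkeeping: the paper's prefactor trick buys brevity and avoids the two-saddle comparison entirely, while your version uses the published asymptotics off the shelf at the cost of a perturbation argument. One small remark: concatenation gives $\expo_{(\xb_1,\yb)}/\expo_{(\xb_0,\yb)}=\expo_{(\xb_1,\xb_0)}=1/\expo_{(\xb_0,\xb_1)}$, which is also what the paper's own displayed computation produces, so your hedge about the path-orientation convention in \eqref{eq:recursive_def_expo} is warranted rather than a defect of your argument.
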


\begin{thm}[structure of the Martin boundary]
\label{thm:main-2}
As the asymptotic direction of $\yb \to\infty$ in \eqref{eq:asymptotic_Martin_kernel} varies in $\mathbb S^{d-1}$, 
the point $u_0$ describes entirely the
circle $2iK'+\mathbb{R}/4K\mathbb{Z}$
in $\TT(k)$.
If moreover $\Gs$ is asymptotically flat, the Martin boundary 
 is homeomorphic to the circle $\mathbb S^1$.
\end{thm}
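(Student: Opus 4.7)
The proof splits into two parts.

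\textbf{Part 1: the image of $\vec{n}\mapsto u_0$ is the full circle.} Lift everything to $\mathbb{Z}^d$ using the 3D-consistency of Section~\ref{subsec:going_to_infinity}. The recursive definition~\eqref{eq:recursive_def_expo} gives, along any minimal path from $x$ to $y$,
\begin{equation*}
\log \expo_{(x,y)}(v+2iK') = \sum_{j=1}^d \bigl( N_j^+\, L_j^+(v) + N_j^-\, L_j^-(v) \bigr),
\end{equation*}
where $N_j^{\pm}$ counts the unit steps in direction $\pm e_j$ and $L_j^\pm(v) = \log\bigl(i\sqrt{k'}\sc((v+2iK'-\alpha_j^\pm)/2)\bigr)$ with $\alpha_j^+=\alpha_j$, $\alpha_j^-=\alpha_j+2K$. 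Dividing by $|x-y|$, the function $\chi$ of~\eqref{eq:def_chi} is a convex combination of the $2d$ elementary functions $L_j^\pm$ with weights equal to the reduced coordinates $n_j^\pm=N_j^\pm/|x-y|$, and the critical equation $\chi'(v_0)=0$ is therefore \emph{linear} in $\vec{n}$. Using $\sc'/\sc=\dn/(\sn\cn)$, one verifies that each pure extreme direction $+e_j$ (resp.\ $-e_j$) corresponds to the unique critical point $v_0=\alpha_j+2K$ (resp.\ $v_0=\alpha_j$) modulo $4K$, and by~\eqref{eq:ordering_angles} these $2d$ values are all distinct and interlaced on $\mathbb{R}/4K\mathbb{Z}$. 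Pick now any continuous path on $\mathbb{S}^{d-1}$ that visits the extreme directions in the cyclic order matching the ordering of their $v_0$ values on the circle. Since $\vec{n}\mapsto v_0$ is continuous (implicit function theorem applied to the uniqueness in~\cite[Lem.~15]{BodTRa-17}), an intermediate value argument on each arc shows that $v_0$ sweeps the whole circle as $\ps$ varies in $\mathbb{S}^{d-1}$.

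\textbf{Part 2: homeomorphism under asymptotic flatness.} The map $\ps\in\mathbb{S}^1\mapsto \vec{n}(\ps)\in\mathbb{S}^{d-1}$ is now continuous and injective (Section~\ref{subsec:monotone}), so the composition $\ps\mapsto u_0(\ps)$ is a continuous map $\mathbb{S}^1\to 2iK'+\mathbb{R}/4K\mathbb{Z}$. To deduce that it is a homeomorphism, it suffices to prove bijectivity, since both source and target are compact Hausdorff circles. Surjectivity follows from an antipodal symmetry: reversing $\ps\mapsto -\ps$ swaps $N_j^+ \leftrightarrow N_j^-$ in each direction, which globally shifts every $\alpha_j^\pm$ by $2K$ and hence translates $v_0$ by $2K$; thus $u_0(-\ps)$ is the antipode of $u_0(\ps)$, forcing the map to have odd winding number and in particular to be onto. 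For injectivity, the strategy is to use $\chi''(v_0)>0$ from~\cite[Lem.~15]{BodTRa-17} together with the linearity of $\chi'$ in $\vec{n}$ to deduce local monotonicity of $\ps\mapsto v_0(\ps)$ near each extreme direction (where it is explicit); combined with the antipodal symmetry, this pushes the winding number down to $\pm 1$, i.e.,\ bijectivity.

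\textbf{Main obstacle.} The delicate step is injectivity in Part 2: one must rule out that the image curve $\{\vec{n}(\ps):\ps\in\mathbb{S}^1\}$ crosses a single affine hyperplane $\{\chi'(v_0,\vec{n})=0\}$ more than once for some value of $v_0$. I expect the combination of strict convexity of $\chi$ at its minimum, the exact antipodal symmetry, and the interlacing of extreme values on the circle to upgrade the winding number count from ``odd'' to ``exactly $\pm 1$'', but handling the interpolation between non-extreme directions and making the monotonicity global (not only local) will be the crux of the argument.
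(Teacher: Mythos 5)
Your decomposition of $\chi$ as a convex combination of the elementary functions $L_j^{\pm}$ and the observation that $\chi'$ is linear in $\vec{n}$ are exactly the right starting point, and your Part~1 is in the spirit of the paper's (terse) argument for the first assertion. But the step you yourself flag as the ``crux'' --- global injectivity of $\ps\mapsto v_0(\ps)$ --- is precisely the content of the paper's proof, and your proposal does not supply it. The paper closes this gap not by a winding-number reduction from ``odd'' to ``$\pm1$'' but by a direct global monotonicity computation: after a cyclic relabelling making all $n_j(\ps)$ non-negative, the one-sided derivative $\frac{d\vec n}{d\ps}$ is a combination with non-negative, not all zero, coefficients of the vectors $\delta n_{k,l}=e_l-e_k$ with $k<l$ in the cyclic ordering \eqref{eq:ordering_angles}; pairing with $\vec F(v_0)=\bigl(\frac{\sn\cdot\cn}{\dn}(\frac{v_0-\alpha_j}{2})\bigr)_j$ and using that all arguments $\frac{v_0-\alpha_j}{2}$ lie in $[-K,K]$, where $\frac{\sn\cdot\cn}{\dn}$ is strictly increasing, gives $\frac{d\vec n}{d\ps}\cdot\vec F(v_0)<0$; since $\chi''(v_0)>0$ at the selected (minimizing) critical point, the implicit function theorem yields $\frac{dv_0}{d\ps}>0$ everywhere. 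Bijectivity then follows because the sliding interval of length $2K-2\varepsilon$ containing $v_0$ makes exactly one turn as $\ps$ does, so the strictly increasing lift of $T$ gains exactly $4K$ over one period.

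Two further concrete problems with your argument. First, the antipodal symmetry $u_0(-\ps)=u_0(\ps)+2K$ rests on $\vec n(-\ps)=-\vec n(\ps)$, which fails for general asymptotically flat graphs (e.g.\ the cone-like regions of Section~\ref{sec:cones}, where opposite cones may carry different patterns and hence unrelated limiting reduced coordinates); so your surjectivity/odd-winding step is unjustified as stated. Second, your identification of the extreme values is reversed: for the pure direction $+e_j$ one has $\chi(v)=\log\bigl(\sqrt{k'}/\dn(\frac{v-\alpha_j}{2})\bigr)$, whose minimum --- the critical point with $\chi''>0$ and $\chi<0$ selected by the saddle-point analysis --- is $v_0=\alpha_j$, not $\alpha_j+2K$. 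This does not destroy the interlacing you invoke, but it indicates the extreme-direction computation was not carried through, and it is exactly the kind of sign/branch bookkeeping that the paper's monotonicity argument is designed to control globally rather than only near the extreme directions.
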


\begin{thm}[minimality of the Martin boundary]
\label{thm:main-3}
Let $x_0\in \Gs$ be any fixed point in the graph and $v\in\mathbb
R/4K\mathbb{Z}$.
The positive harmonic functions $h_v(x)=\expo_{(\xb_0,x)}(2iK'+v)$ are minimal.
\end{thm}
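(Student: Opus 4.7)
The plan is to transfer the minimality question on the isoradial graph $\Gs$ to the lattice $\mathbb Z^d$ via the $3$D-consistency of Section~\ref{subsec:going_to_infinity}, where the classical minimality of positive exponential harmonic functions is available \cite{ChDe-60,DoSnWi-60}. Using star-triangle transformations, lift $h_v$ to a function $\tilde{h}_v$ on $\mathbb Z^d$ (or on a slab $\mathbb Z^{d'}$ if $\Gs$ only spans a slab, in which case the argument applies verbatim with $d$ replaced by $d'$). Iterating~\eqref{eq:recursive_def_expo} yields
\begin{equation*}
     \tilde{h}_v(n_1,\ldots,n_d)=\prod_{j=1}^d \lambda_j(v)^{n_j},\qquad \lambda_j(v)=i\sqrt{k'}\,\sc\bigl(\tfrac{2iK'+v-\alpha_j}{2}\bigr),
\end{equation*}
and the Jacobi addition formula $\sc(iK'+u)=i/\dn(u)$ shows that each $\lambda_j(v)$ is real. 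Since the vertices of $\Gs$ embed into the even sublattice $\Lambda\subset\mathbb Z^d$ (itself isomorphic to $\mathbb Z^d$ as an abelian group), the restriction $\tilde{h}_v\vert_\Lambda$ is a genuine positive exponential $e^{\zeta(v)\cdot x}$ for some $\zeta(v)\in\mathbb R^d$, harmonic for the translation-invariant two-step Laplacian on $\Lambda$ inherited from the lift.

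Next, take any positive harmonic function $f$ on $\Gs$ with $0\leq f\leq h_v$ and lift it to $\tilde{f}$ on $\mathbb Z^d$ via the same star-triangle moves. By \cite[Prop.~8]{BodTRa-17}, each $\nabla\to Y$ move determines the value at the central degree-$3$ vertex $x_0$ via
\begin{equation*}
     \tilde{f}(x_0)=\frac{\sum_{j=1}^3 \rho_j\, \tilde{f}(x_j)}{m^2(x_0)+\sum_{j=1}^3 \rho_j},
\end{equation*}
a positive linear combination of the three neighboring values, while the reverse $Y\to\nabla$ move simply forgets a value. Applied in parallel to $\tilde{f}$, $\tilde{h}_v$ and $\tilde{h}_v-\tilde{f}$, this preserves the sandwich $0\leq\tilde{f}\leq\tilde{h}_v$ throughout the lifting, so that $\tilde{f}\vert_\Lambda$ is a non-negative two-step harmonic function on $\Lambda$ dominated by the positive exponential $\tilde{h}_v\vert_\Lambda$.

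Finally, the classical minimality of positive exponentials for translation-invariant random walks on $\Lambda\cong\mathbb Z^d$ \cite{ChDe-60,DoSnWi-60} forces $\tilde{f}\vert_\Lambda=c\,\tilde{h}_v\vert_\Lambda$ for some $c\in[0,1]$, and restriction to $\Gs\subset\Lambda$ yields $f=c\,h_v$, the desired minimality. The main obstacle is the propagation of the sandwich $0\leq\tilde{f}\leq\tilde{h}_v$ through a potentially infinite sequence of star-triangle moves: this is ensured by the locality of each move together with the positivity of the conductances and masses in~\eqref{eq:conductances}--\eqref{eq:mass}, since any given vertex of $\mathbb Z^d$ is determined after finitely many moves. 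The parity subtlety coming from $\lambda_j(v)<0$ is a secondary issue, handled by passing to the even sublattice $\Lambda$ and the corresponding two-step walk.
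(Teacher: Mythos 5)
Your route is genuinely different from the paper's, and it contains a gap at its pivotal step: the invocation of ``the translation-invariant two-step Laplacian on $\Lambda$ inherited from the lift''. No such operator is constructed, and its existence is not clear. The $3$D-consistency of Section~\ref{subsec:going_to_infinity} extends $h_v$ and $f$ to all even vertices of $\mathbb{Z}^d$ reachable by star-triangle moves, but what it produces is one local harmonicity relation \emph{per monotone surface} through a given vertex: the set of incident edges, the conductances and above all the mass $m^2(x\vert k)$ in \eqref{eq:mass} depend on the local geometry of the surface, so distinct surfaces through the same lattice point impose distinct linear relations. There is no canonical single convolution operator on the sublattice for which the lifted functions are harmonic, and without one the Choquet--Deny/Doob--Snell--Williamson theorem simply cannot be invoked. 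Making this work would require an extra construction (e.g., a translation-invariant positive combination of the finitely many local relations, checked to define an irreducible killed walk whose $t$-harmonic exponentials are exactly the $\prod_j\lambda_j^{n_j}$), which is a substantial missing idea, not a routine verification. A smaller but symptomatic inaccuracy: the jumps of any such walk are of the form $\pm(e_j-e_l)$, so the relevant group is the rank-$(d-1)$ sublattice $\{n:\sum_j n_j=0\}$, not the index-two even sublattice $\Lambda\cong\mathbb{Z}^d$ on which you propose to apply Choquet--Deny; irreducibility on $\Lambda$ fails. The parts of your argument that do hold are the product formula $\expo_{(x_0,y)}(u)=\prod_j\lambda_j(u)^{N_j}$, the sign/parity bookkeeping, and the preservation of the sandwich $0\leq\tilde f\leq\tilde h_v$ under each $\nabla\to Y$ move (the new value is a positive combination of neighboring values), together with the fact that each lattice vertex is settled after finitely many moves.

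For contrast, the paper avoids the lattice reduction entirely. It starts from the Martin representation $f(x)=\int\expo_{(x,x_0)}(u)\,\mu(\ud u)$ already available from Theorems~\ref{thm:main-1} and~\ref{thm:main-2}, introduces the growth rate $\tau(\pps,u)=\chi(v)$ of $\expo$ in the asymptotic direction $\pps$, and shows that $\tau(\cdot,u)$ is a non-degenerate linear form on $\mathbb{R}^d$, so that each exponential decays in exactly a half-space of directions. Proposition~\ref{prop:extr_harmo} then says that a positive harmonic function decaying in at least half of the directions must have $\mu$ supported on a singleton; since $h\leq h_v$ forces $h$ to decay wherever $h_v$ does, minimality follows. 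If you want to salvage your approach, you must either build the translation-invariant operator alluded to above, or replace Choquet--Deny by an argument that uses only the family of surface-wise harmonicity constraints.
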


\subsection{Proofs}

\begin{proof}[Proof of Theorem \ref{thm:main-1}]
The starting point is the asymptotics \eqref{eq:asymptotics_BodTRa-17} obtained in \cite{BodTRa-17}, that we first apply to the Green function $G(\xb_0,\yb)$. To derive the asymptotics of $G(\xb_1,\yb)$ we begin by rewriting  \eqref{eq:def_Green_function} as
\begin{equation*}
     G(\xb_1,\yb) =\frac{k'}{4i\pi} \int_{\Gamma_{\xb_1,\yb}} \expo_{(\xb_1,\xb_0)}(u)\expo_{(\xb_0,\yb)}(u) \ud u.
\end{equation*}
We then perform an asymptotic analysis of the above integral. The only difference with the proof of \cite[Thm~14]{BodTRa-17} is the presence of the term $\expo_{(\xb_1,\xb_0)}(u)$, compare with \eqref{eq:def_Green_function}. Using the saddle point method and similar ideas as in \cite{BodTRa-17}, we obtain that this factor will actually appear as a prefactor in the final asymptotics, evaluated at $u_0$. In other words,
\begin{equation*}
     G(\xb_1,\yb)=\expo_{(\xb_1,\xb_0)}(u_0)\frac{k' \expo_{(x_0,y)}(u_0)}{2\sqrt{2\pi \vert \xb_0-\yb\vert \chi''(v_0)}}(1+o(1)),
\end{equation*}
with the exact same $u_0=2iK'+v_0$ as in \eqref{eq:asymptotics_BodTRa-17}. We then easily reach the conclusion that the ratio of Green function behaves as in \eqref{eq:asymptotic_Martin_kernel}, which completes the proof.
\end{proof}

\begin{proof}[Proof of Theorem~\ref{thm:main-2}]
    There is correspondence between the direction along which $y$ goes to
    infinity and $u_0=2iK'+v_0$, see \cite[Lem.~15]{BodTRa-17} and its proof.
    This $v_0$ can be anywhere on the circle $\mathbb{R}/4K\mathbb{Z}$: the interval
    from the asymptotics of the Green function moves with the set of poles of
    the exponential function, which depends on the direction of $y$.

    Now we assume
    $\Gs$ to be asymptotically flat, so the application $\ps\mapsto
    \vec{n}(\ps)=(n_j(\ps))_{1\leq j \leq d}$ is well defined and continuous.
    It is even
    differentiable everywhere, except for $\ps$ for which at least one of the
    $n_j(\ps)$ is zero (meaning that the corresponding minimal path seen on the
    monotone surface is moving to another orthant of $\mathbb{Z}^d$), but even
    at these points it has left and right derivatives.

    We want to prove that $\ps\mapsto u_0(\ps)$ is differentiable (has one-sided
    derivatives) when $\ps\mapsto n_j(\ps)$ is differentiable (has one-sided
    derivatives), and that the derivative is strictly positive, in the sense
    that the points move in the same direction around their respective circles.

    Up to a possible cyclic relabelling of the coordinates, we can assume that
    all the $n_j(\ps)$ are non-negative, and we look at a variation of $\ps$ in
    the positive direction.
    Looking at how frequencies of steps along an infinite path should vary as
    the asymptotic direction in the plane moves slightly in the positive
    direction while the path stays in the same orthant, we see that the (one-sided)
    derivative $\frac{d\vec{n}}{d \ps}$ of the reduced coordinates is
    a linear combination with positive coefficients of vectors of the form
    \begin{equation*}
     \delta n_{k,l}= (0,\ldots,0,-1,0,\ldots,0,1,0),
    \end{equation*}
    where the $1$ is at position $l$ and the $-1$ at position $k<l$, namely,
    \begin{equation*}
      \frac{d\vec{n}}{d\ps} = \sum_{k<l} a_{k,l}(\ps)\delta n_{k,l},
    \end{equation*}
    with $a_{k,l}(\ps)\geq 0$, and not all equal to zero.

    Rewrite the condition that $v_0=u_0-2iK'$ is a critical point as in the
    proof of~\cite[Lem.~15]{BodTRa-17}:
    \begin{equation*}
      \chi'(v_0)=\sum_{j=1}^d n_j
      \frac{ \sn \cdot  \cn }{ \dn } \left(\frac{v_0-\alpha_j}{2}\right) =
      \vec{n}\cdot \vec{F}(v_0)=0,
    \end{equation*}
    with $ \vec{F}(v)=  (\frac{\sn \cdot
    \cn}{\dn}(\frac{v-\alpha_j}{2}))_{1\leq j\leq d}$.
    The implicit function theorem implies that $v_0$, as a function of $\ps$, has a
    (one-sided) derivative $\frac{dv_0}{d\ps}$, and it satisfies:
    \begin{equation*}
      \frac{d v_0}{d\ps} \chi''(v_0) + \frac{d \vec{n}}{d \ps} \cdot
      \vec{F}(v_0)=0.
    \end{equation*}
    Now, writing down explicitly the scalar product
    \begin{equation*}
      \frac{d\vec{n}}{d\ps}\cdot \vec{F}(v_0)=
      \sum_{k<l} a_{k,l}
      \left(%
	\frac{\sn\cdot\cn}{\dn}\Bigl(\frac{v_0-\alpha_l}{2}\Bigr)
	-
	\frac{\sn\cdot\cn}{\dn}\Bigl(\frac{v_0-\alpha_k}{2}\Bigr)
      \right),
    \end{equation*}
    we see that this quantity is strictly negative, as all the quantities
    $\frac{v_0-\alpha_j}{2}$ are in the interval~$[-K,K]$, on which the function
    $\frac{\sn\cdot\cn}{\dn}$ is strictly increasing (recall that
    $\alpha_k<\alpha_l$).

    Since $v_0$ corresponds to a simple critical point of $\chi$, which is
    actually a local minimum, $\chi''(v_0)$ is strictly positive. As a
    consequence,
    \begin{equation*}
      \frac{d v_0}{d\ps} >0.
    \end{equation*}


    This implies that the application
    \begin{equation*}
      T:\ps\in\mathbb{S}_1 \mapsto u_0 \in 2iK'+\mathbb{R}/4K\mathbb{Z}
    \end{equation*}
    is continuous, and lifts to a strictly increasing function on the universal
    cover.
    Moreover, we know \cite[Lem.~17]{BodT-11} that the interval of length $2K-2\varepsilon$ in which
    the real part of $u_0$ lies is winding once around the circle.
    Indeed, it should contain the zeros of the exponential function from the
    base vertex $x$ to $y$. When $y$ switches to the next orthant, while
    going around $x$, the sliding interval is dropping the $\alpha_j$ closer
    to its right boundary, and swallows $\alpha_j+\pi$. Therefore, after a full
    turn around $x$, the left and right boundaries of this sliding interval made
    also exactly one full turn.
    Since the total length between the left end of this interval at the
    beginning of the full turn and its right end at the end is strictly less
    than twice the length of the circle, it means that $T$ winds also exactly
    once around the circle, therefore $T$
    is bijective. Since the image is compact, $T$ is automatically open, so it
    defines a homeomorphism.
\end{proof}

\begin{proof}[Proof of Theorem~\ref{thm:main-3}]
It will be given in Section \ref{sec:minimal_harmonic_functions}.
\end{proof}

\subsection{An example: cones of homogeneities}
\label{sec:cones}

The simplest inhomogeneous extension of the fully homogeneous case of $\mathbb Z^d$ consists in modifying the jumps at a finite number of sites, as explained in our introduction (see also \cite{KuMa-98}). Another simple extension is to split the grid in a finite union of connected domains, for example two half-planes, or more generally a finite number of cone-like regions as on Figure \ref{fig:Iso0}, and to assign to each region a (different) set of transition probabilities. Up to our knowledge, a Ney and Spitzer theorem is not known in this simple setting, even in the simplest instance of two half-planes. It is worth mentioning that our approach allows to study such cases, provided the transition probabilities follow the construction of elliptic conductances as introduced in Section \ref{sec:iso_expo}.

\section{The case of periodic isoradial graphs}
\label{sec:periodic_case}

In Section~\ref{sec:amoeba}, we first
study general periodic isoradial graphs and relate important quantities needed in their analysis (characteristic polynomial, amoeba, degeneracy of the amoeba, quantity $u_0$ in \eqref{eq:asymptotic_Martin_kernel_intro}, etc.)\ to natural objects in $\mathbb Z^d$ (generating function of the transition probabilities, set $\partial D_t$ in \eqref{eq:def_partial_D_t}, spectral radius, mapping $u(p)$ in \eqref{eq:asymptotic_Martin_kernel_NS}, etc.).
Then we show in Section~\ref{sec:examples} how our results give another proof of the classical Ney and Spitzer in two cases (the square and triangular lattices).

\subsection{Amoeba of the characteristic polynomial and jump generating function} 
\label{sec:amoeba}

Using the theory of analytic combinatorics in several variables \cite{PeWi}, we
analyze the exponential decay of
the Green function through Fourier analysis. We denote
a vertex of $\Gs$ by a triple $(v,n_1,n_2)$, where $v$ is the copy of the
vertex in the fundamental domain and $(n_1,n_2)$ is the element of $\mathbb{Z}^2$
corresponding to the translation moving $v$ to the vertex.

\subsubsection*{A Ney-Spitzer theorem for periodic planar graphs}
The first part of this section is not specific to isoradial graphs and holds
for all planar periodic graphs (having periodic masses and conductances) with
at least one positive mass.

The Laplacian can be seen as a periodic convolution operator on vector-valued
functions of $\mathbb{Z}^2$. It acts in Fourier space by multiplication by a
matrix $\Delta(z,w)$, with $\vert z\vert=\vert w\vert=1$.
The square matrix $\Delta(z,w)$ has rows
and columns indexed by vertices of the fundamental domain, and the coefficient
between $v$ and $w$ is the sum of the conductances of the edges between
$(v,0,0)$ and $(w,n_1,n_2)$ multiplied by $z^{n_1} w^{n_2}$.

If $(v,n_1,n_2)$ and $(w,n'_1,n'_2)$ are two vertices of $\Gs$, the Green function can
be expressed as the Fourier inverse transform of
$\Delta(z,w)^{-1}=\frac{Q(z,w)}{P(z,w)}$,
where $P(z,w)=\det \Delta(z,w)$ and $Q(z,w)$ is the adjugate matrix of
$\Delta(z,w)$:
\begin{equation*}
     G((v,n_1,n_2),(w,n'_1,n'_2))= \iint _{\vert z\vert=\vert w\vert=1}
     \frac{z^{n_1-n'_1} w^{n_2-n'_2}Q(z,w)_{v,w}}{P(z,w)}
     \frac{\ud z}{2i\pi z}\frac{\ud w}{2i\pi w}.
\end{equation*}
The asymptotic behavior of the Green function above is encoded in the singularities
of the integrand, namely, the zeros of $P$. The zero-set of $P$ defines an
algebraic curve, called the \emph{spectral curve} of the Laplacian.
The \emph{amoeba} of $P$ (see \cite[Chap.~6]{GeKaZe}) is
the image of the spectral curve by the application 
\begin{equation*}
     (z,w)\mapsto(\log\vert z\vert,\log\vert w\vert)
\end{equation*}     
and plays an important
role in the discussion; see Figure~\ref{fig:amoeba} for an example.

\begin{figure}[ht]
\includegraphics[width=0.4\textwidth]{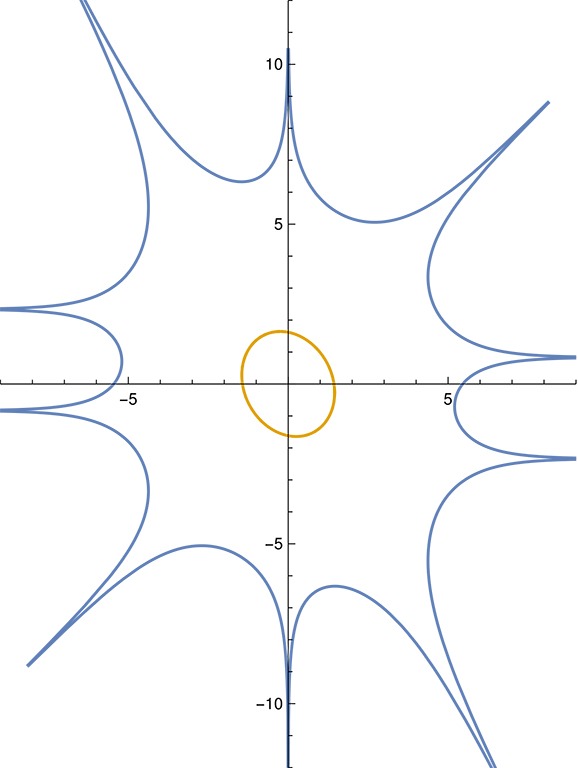}
\caption{Amoeba of the spectral curve of a Laplacian with mass on a periodic
  graph. The amoeba itself is the region between the yellow and blue curves. The
  curve here has geometric genus one, and the
  (unique in this case) bounded connected component of the
complementary of the amoeba
containing the origin is called the \emph{oval}.}
\label{fig:amoeba}
\end{figure}

In the massive case, there is no zero of $P$ on the unit torus $\vert z\vert=\vert w\vert=1$, and
thus we can deform continuously the contour of integration defining $G$ to
increase or decrease the radii of the torus, until we touch the spectral curve,
in order to obtain exponential decay.

We can say even more, because of
geometric properties of spectral curves of Laplacians.
The spectral curve is always a simple Harnack
curve~\cite{Ke-19}. In particular, the boundary of the amoeba is the image of
the real locus of the spectral curve. The bounded connected component of the
complementary of the amoeba
containing the origin (called the oval $\mathcal{O}$ in the sequel, 
see Figure \ref{fig:amoeba}) is a
convex set whose boundary corresponds to a (subset) of positive real roots of~$P$:
\begin{equation*}
     P(e^{\zeta_1},e^{\zeta_2})=0,
\end{equation*}
which in the classical theorem by Ney and Spitzer 
is the generalization of the level set $\partial D_t$ 
defined in \eqref{eq:def_partial_D_t}.
All conditions are met to be in the smooth point situation described
in~\cite[Chap.~9]{PeWi}, allowing us to readily obtain the asymptotics of the
ratio of Green functions for copies of the same vertex $v$ in the fundamental
domain.

Let $x_0=(v,0,0)$, $x_1=(v,n_1,n_2)$ and $y=(v,n'_1,n'_2)$.
As $(n'_1,n'_2)$ goes to infinity in $\mathbb{Z}^2$ in the direction
$\ps$, we get the following asymptotics for the Green function:
\begin{equation*}
  \lim_{y\to\infty}\frac{G(x_1,y)}{G(x_0,y)} = e^{\zeta\cdot n},
\end{equation*}
where $\zeta=\zeta(\ps):=\arg\max \{\ps\cdot s : s \in
\mathcal{O}\}$, 
which by convexity is reached along the boundary of $\mathcal{O}$.
The homeomorphism $\psi\colon\ps\in\mathbb{S}^1\mapsto \zeta(\ps)\in\partial
\mathcal{O}$ is the analogue of the Ney and Spitzer function $\zeta$ in
the fully transitive case, which we recall here for completeness:

\begin{lem}[\cite{He-63}]
\label{lem:He-63}
Let $\rho$ denote the spectral radius of the transition kernel. Assume $t\geq\rho$, with $t\neq \rho$ if the random walk has zero drift. Then the set $D_t=\{\zeta\in\mathbb R^d: \phi(\zeta)\leq t\}$ is compact and convex, the gradient
\begin{equation*}
     \textnormal{grad } \phi(\zeta)=\sum_{x\in\mathbb Z^d} x e^{\zeta\cdot x} P(0,x)
\end{equation*}
exists everywhere on $D_t$ and does not vanish on its boundary $\partial D_t$. Furthermore, the mapping
\begin{equation}
\label{eq:grad_p}
     \zeta\mapsto \ps=\frac{\textnormal{grad } \phi(\zeta)}{\vert\textnormal{grad } \phi(\zeta)\vert}
\end{equation}
determines a homeomorphism between $\partial D_t$ and $\mathbb S^{d-1}$.
\end{lem}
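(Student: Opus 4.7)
The plan is to establish the four assertions in order, writing $\psi(\zeta)=\log\phi(\zeta)$ for the cumulant generating function of the step distribution, which is defined on all of $\RR^d$ since the walk has bounded increments.

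First I would prove that $D_t$ is convex and compact. Convexity follows from convexity of $\psi$, which is a standard consequence of H\"older's inequality applied to $\phi(\lambda\zeta+(1-\lambda)\zeta')$; since $\log$ is monotone, $\{\phi\leq t\}=\{\psi\leq\log t\}$ is then convex. For compactness I would show that $\phi(r u)\to\infty$ as $r\to\infty$ for every direction $u\in\mathbb S^{d-1}$: under the standing assumption that the support of $p(0,\cdot)$ affinely spans $\RR^d$, every direction $u$ admits some $x$ in the support with $u\cdot x>0$, and the single term $p(0,x)e^{r u\cdot x}$ forces $\phi(r u)\to\infty$, so the sublevel sets of $\phi$ are bounded.

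Next, the existence of $\textnormal{grad}\,\phi$ everywhere on $\RR^d$ is immediate because, under the finite-support assumption, $\phi$ is a finite linear combination of exponentials, hence real-analytic. The crucial point is its non-vanishing on $\partial D_t$. I would use strict convexity of $\psi$: the Hessian of $\psi$ at $\zeta$ is the covariance matrix of the Cram\'er-tilted distribution $p(0,x)e^{\zeta\cdot x}/\phi(\zeta)$, which is positive definite because the support is affinely full-dimensional. Consequently $\phi$ has a unique global minimum $\zeta^{\star}$, which is the only zero of $\textnormal{grad}\,\phi$, and $\phi(\zeta^{\star})=\rho$. Under the hypothesis on $t$, this minimizer lies strictly inside $\{\phi<t\}$, so $\textnormal{grad}\,\phi$ is nonzero on the level set $\partial D_t=\{\phi=t\}$.

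Finally, for the homeomorphism I would invoke the classical Gauss map argument. The set $\partial D_t$ is a smooth convex hypersurface (regular level set of $\phi$), and $\textnormal{grad}\,\phi(\zeta)$ is its outward normal. Strict convexity of $D_t$ gives injectivity of the normalized Gauss map $\zeta\mapsto\ps=\textnormal{grad}\,\phi(\zeta)/|\textnormal{grad}\,\phi(\zeta)|$: two distinct boundary points sharing the same $\ps$ would both maximize the linear functional $\zeta\mapsto\ps\cdot\zeta$ on $D_t$, contradicting strict convexity. Surjectivity is standard: for any $\ps\in\mathbb S^{d-1}$, maximizing $\ps\cdot\zeta$ over the compact convex $D_t$ produces a boundary point whose outward normal is $\ps$. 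Continuity is immediate from real-analyticity of $\phi$, and a continuous bijection from the compact set $\partial D_t$ onto the Hausdorff $\mathbb S^{d-1}$ is automatically a homeomorphism.

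The main obstacle I anticipate is pinning down the sharp hypothesis on $t$, i.e.\ guaranteeing that the minimizer $\zeta^{\star}$ lies in the open interior of $D_t$ rather than on its boundary. Indeed, at $t=\rho$ the set $D_\rho$ collapses to the single point $\{\zeta^{\star}\}$ and the homeomorphism assertion fails; the zero-drift caveat in the statement is precisely the place where this degeneracy must be ruled out by a careful case distinction, and of the four assertions this is the one that I expect to require the most delicate bookkeeping.
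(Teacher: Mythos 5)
The paper does not actually prove this lemma: it is quoted verbatim from Hennequin \cite{He-63} (see also Ney--Spitzer and Woess) ``for completeness'', so there is no in-paper argument to compare yours against. Your sketch follows the standard route --- convexity of sublevel sets of the moment generating function, coercivity for compactness, strict convexity from the positive-definite Hessian of the tilted law, and the Gauss-map argument for the homeomorphism --- and it is essentially sound in the regime $t>\rho$.

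Two points need repair. First, affine spanning of the support of $p(0,\cdot)$ does not give coercivity of $\phi$: with support $\{0,e_1,\dots,e_d\}$ the support affinely spans $\RR^d$, yet $\phi(-re_1)$ stays bounded as $r\to\infty$ and $D_t$ is unbounded. The hypothesis you actually need is irreducibility of the walk, equivalently that $0$ lies in the interior of the convex hull of the support, which guarantees that every direction $u$ sees some support point with $u\cdot x>0$; the same hypothesis is what gives the \emph{existence} (not merely uniqueness) of the minimizer $\zeta^{\star}$ in your third paragraph. Second, concerning $t=\rho$: for a finitely supported irreducible walk one always has $\rho=\min_{\zeta}\phi=\phi(\zeta^{\star})$, so $D_\rho=\{\zeta^{\star}\}$ is a singleton \emph{whatever the drift}, the gradient vanishes there, and no homeomorphism with $\mathbb S^{d-1}$ is possible. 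This is therefore not ``delicate bookkeeping'' that a case distinction will rescue: the nonvanishing-gradient and homeomorphism assertions hold exactly when $t>\rho$, which is the only regime the surrounding text uses (for $t=1$, the zero-drift caveat in the statement is precisely what ensures $1>\rho$). Restricted to $t>\rho$, your argument is correct.
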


The following result is stated under Corollary 1.3 in \cite{NeSp-66} for $t=1$ and random walks with drift; it is extended to other values of $t$ and to the zero drift case in \cite[(25.21)]{Wo-00}.

\begin{thm}
\label{thm:NS}
Under the same assumptions as in Lemma \ref{lem:He-63}, let $\ps$ be a point in $\mathbb S^{d-1}$ and $\zeta(\ps)$ be the unique solution in $\partial D_t$ of \eqref{eq:grad_p}.
Let $y$ denote any sequence in $\mathbb Z^d$ such that $\vert y\vert \to\infty$ and $\frac{y}{\vert y\vert}\to \ps$. Then the ratio $G(x_1,y\vert \tfrac{1}{t})/G(x_0,y\vert \tfrac{1}{t})$ converges to $e^{\zeta(\ps)\cdot (x_1-x_0)}$.
\end{thm}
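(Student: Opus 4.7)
The proof proceeds by an exponential tilt (Doob $h$-transform with an exponential), which reduces the statement to a classical asymptotic property of random walks with drift. Given $\ps\in\mathbb{S}^{d-1}$, Lemma~\ref{lem:He-63} provides a unique $\zeta=\zeta(\ps)\in\partial D_t$ with $\phi(\zeta)=t$ and $\nabla\phi(\zeta)$ positively proportional to $\ps$. Since $\sum_{y}p(x,y)e^{\zeta\cdot(y-x)}=\phi(\zeta)=t$, the kernel
\begin{equation*}
p^\zeta(x,y):=\frac{p(x,y)\,e^{\zeta\cdot(y-x)}}{t}
\end{equation*}
is stochastic and corresponds to a translation-invariant random walk on $\mathbb{Z}^d$ whose drift $\mu=\nabla\phi(\zeta)/t$ points in the direction $\ps$ and is non-zero by Lemma~\ref{lem:He-63}; in particular the tilted walk is transient.

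A direct induction gives $p_n^\zeta(x,y)=p_n(x,y)\,e^{\zeta\cdot(y-x)}/t^n$; summing in $n$ and comparing with \eqref{eq:Green_function_t} yields the exact identity
\begin{equation*}
G(x,y\vert\tfrac{1}{t})=e^{-\zeta\cdot(y-x)}\,G^\zeta(x,y),
\end{equation*}
where $G^\zeta$ denotes the (ordinary, unkilled) Green function of the tilted walk. The factor $e^{-\zeta\cdot y}$ cancels in the Martin ratio, producing
\begin{equation*}
\frac{G(x_1,y\vert 1/t)}{G(x_0,y\vert 1/t)}=e^{\zeta(\ps)\cdot(x_1-x_0)}\cdot\frac{G^\zeta(x_1,y)}{G^\zeta(x_0,y)},
\end{equation*}
so the theorem follows once we show that $G^\zeta(x_1,y)/G^\zeta(x_0,y)\to 1$ in the prescribed asymptotic regime.

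By translation invariance $G^\zeta(x_i,y)=G^\zeta(0,y-x_i)$, and both $y-x_0$ and $y-x_1$ still tend to infinity in the direction $\ps$. The desired convergence therefore reduces to the classical Green-function asymptotics for a transient random walk with non-zero drift: via the multivariate local central limit theorem one approximates $p_n^\zeta(0,z)$ by a Gaussian density with mean $n\mu$ and covariance $n\Sigma_\zeta$, and a saddle-point evaluation of $G^\zeta(0,z)=\sum_n p_n^\zeta(0,z)$ around the dominant scale $n\sim\vert z\vert/\vert\mu\vert$ produces an asymptotic
\begin{equation*}
G^\zeta(0,z)\sim \frac{C_\zeta(z/\vert z\vert)}{\vert z\vert^{(d-1)/2}},
\end{equation*}
with prefactor continuous in the asymptotic direction. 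A shift by $a\in\mathbb{Z}^d$ changes $\vert z\vert$ by $O(1)$ and $z/\vert z\vert$ by $O(1/\vert z\vert)$, so the ratio $G^\zeta(0,z-a)/G^\zeta(0,z)$ tends to $1$.

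The main technical obstacle is this last step: one needs a uniform local central limit theorem for $p_n^\zeta$ together with exponential tail bounds on the contributions from $n$ far from the critical scale. When $t>\rho$ the extra factor $1/t^n$ supplies a clean exponential cut-off and the calculation is a standard stationary-phase argument; in the boundary case $t=\rho$ with non-zero drift one needs the full Stone--Spitzer LCLT for walks with drift. Since this analysis is carried out in detail in \cite{NeSp-66,Wo-00}, we would limit ourselves to the exponential-tilting reduction described above and refer to those sources for the final asymptotic step.
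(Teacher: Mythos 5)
The paper does not prove this statement: it is quoted from the literature, with the $t=1$ case attributed to \cite[Cor.~1.3]{NeSp-66} and the extension to general $t$ (and zero drift) to \cite[(25.21)]{Wo-00}. Your exponential-tilting reduction is in fact exactly the mechanism by which Woess obtains that extension: the tilted kernel $p^\zeta$ is stochastic because $\phi(\zeta)=t$, the identity $G(x,y\vert\tfrac1t)=e^{-\zeta\cdot(y-x)}G^\zeta(x,y)$ is correct, and the choice of $\zeta=\zeta(\ps)$ via \eqref{eq:grad_p} makes the tilted drift point in the direction $\ps$, so that the remaining ratio is the $1$-Martin kernel of a drifted walk evaluated in its own drift direction, which converges to $e^{0\cdot(x_1-x_0)}=1$ by Ney--Spitzer. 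So your proposal is a faithful reconstruction of the standard proof rather than a genuinely different route, and deferring the local-limit/renewal step to \cite{NeSp-66,Wo-00} is consistent with what the paper itself does.

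Two small inaccuracies are worth correcting. First, the displayed asymptotic $G^\zeta(0,z)\sim C_\zeta(z/\vert z\vert)\,\vert z\vert^{-(d-1)/2}$ is not what Ney--Spitzer prove: their Theorem~2.2 gives $\vert z\vert^{(d-1)/2}e^{\eta(z/\vert z\vert)\cdot z}G^\zeta(0,z)\to C$, and the factor $e^{-\eta(z/\vert z\vert)\cdot z}$ does not tend to $1$ merely because $z/\vert z\vert\to\ps$ (one has $\eta(z/\vert z\vert)\to 0$ but $\vert z\vert\to\infty$). Your ``shift by $a$ perturbs the direction by $O(1/\vert z\vert)$'' argument is therefore too quick as stated; the clean way out is to invoke \cite[Cor.~1.3]{NeSp-66} directly for the ratio, which is what you implicitly do. Second, in the boundary case $t=\rho$ the point $\zeta(\ps)$ is the minimizer of $\phi$, so the tilted walk has \emph{zero} drift (not non-zero drift as you write); there the argument needs $d\geq 3$ for transience and the driftless Green asymptotics $G^\zeta(0,z)\asymp\vert z\vert^{2-d}$, for which the ratio still tends to $1$. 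Neither point affects the overall validity of the reduction.
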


\subsubsection*{The isoradial periodic case}

When in addition the periodic graph $\Gs$ is isoradial, with the elliptic
conductances and masses from \eqref{eq:conductances} and \eqref{eq:mass}, the
spectral curve has geometric genus $1$ and there is a unique bounded connected component
of the complementary of the amoeba~\cite{BodTRa-17}. Furthermore, we have an explicit
parameterization of its boundary via the exponential function:
\begin{equation}
\label{eq:para}
     \xi\colon u\mapsto \bigl(\log\vert e_{x,x+(1,0)}(u)\vert,\log\vert e_{x,x+(0,1)}(u)\vert\bigr),
\end{equation}
where $u\in 2iK'+ \mathbb{R}/4K\mathbb{Z}$. The composition
$\xi^{-1}\circ\psi$ is the application giving the $u_0$ from the asymptotics
of Green function of Equation~\eqref{eq:asymptotics_BodTRa-17} for every
direction $\ps$ along which $y$ can go to infinity.

\subsection{Examples}
\label{sec:examples}

Among all periodic isoradial graphs, two of them (namely, the square and triangular lattices) are of special interest, as they correspond to models of homogeneous random walks in $\mathbb Z^2$. In this section we show how, in these two cases, our results match with the classical Ney and Spitzer theorem. 

In order to state precisely this connection, it is important to notice that in Ney and Spitzer framework, a set of transition probabilities is fixed (through the function $\phi$ in \eqref{eq:def_phi}) and then the $t$-Martin boundary is computed for any fixed $t$ larger than the spectral radius. On the other hand, in the isoradial setting the conductances (related to the transition probabilities) and the mass (related to the variable $t$) both depend on the same elliptic modulus $k$, and it is a priori unclear that we can construct a random walk model with transition probabilities independent of the $t$ variable. This will be shown in the two above-mentioned examples, using the degrees of freedom that we have on the conductances and on the elliptic modulus.

\subsubsection*{The square lattice}
The first example is the square lattice with angles $\alpha$ and
$\beta$ associated to the two families of parallel train-tracks, with $\alpha <
\beta < \alpha+2K < \beta+2K$, see
Figure~\ref{fig:Iso0}. Because the definition of conductances and
masses is invariant by rotation of the whole embedding, one can further assume
that $\beta=-\alpha=\theta$ (see Figure \ref{fig:rhombus_angle}).
There are thus two types of edges: horizontal
with opening angle $\overline{\theta}=\frac{\pi}{2K}\theta\in(0,\frac{\pi}{2})$ and vertical with
opening angle
$\frac{\pi}{2}-\overline{\theta}$.
The characteristic polynomial of the model is given by \cite[Sec.~5.2]{BodTRa-17}
\begin{equation}
\label{eq:charac_poly_square}
     P(z,w)=m+2(c_1+c_2)-c_1\left(z+\frac{1}{z}\right)-c_2\left(w+\frac{1}{w}\right),
\end{equation}
with $c_1=\sc(\theta)$, $c_2=\sc(K-\theta)$ and $m$ as in \eqref{eq:mass}. It is convenient to normalize \eqref{eq:charac_poly_square} so as to have transition probabilities summing to one:
\begin{equation*}
       \widetilde P(z,w)=\frac{m+2(c_1+c_2)}{2(c_1+c_2)}-p_1\left(z+\frac{1}{z}\right)-p_2\left(w+\frac{1}{w}\right),
\end{equation*}
with 
\begin{equation}
\label{eq:def_p1_p2}
     p_1=\frac{c_1}{2(c_1+c_2)} \quad \text{and} \quad p_2=\frac{c_2}{2(c_1+c_2)}=\frac{1}{2}-p_1.
\end{equation}
It is natural to associate to this model the classical simple random walk on $\mathbb Z^2$, with transition probabilities Laplace transform
\begin{equation*}
     \phi(\zeta_1,\zeta_2)=p_1(e^{\zeta_1}+e^{-\zeta_1})+p_2(e^{\zeta_2}+e^{-\zeta_2}).
\end{equation*}
Notice that horizontal (resp.\ vertical) jumps have the same weight, due to the constraints on the conductances in \eqref{eq:charac_poly_square} (in particular, the random walk has zero drift).
We now state precisely the connection between the two frameworks.

\subsubsection*{Transition probabilities generating function, characteristic polynomial and their zero-sets}

It is clear that $\widetilde P(z,w)=0$ if and only if $\phi(\log z,\log w)=t$, with $t=1+m/(2c_1+2 c_2)$. 
In particular, using the notation of Section \ref{sec:amoeba}, the oval of the amoeba is in correspondence with the set $\partial D_t$.

\subsubsection*{Exponential functions}
As said in the introduction, extremal positive harmonic functions in group
structures are exponential functions \cite{ChDe-60}. In our context of the
planar lattice, they are given by (writing $\zeta=(\zeta_1,\zeta_2)$ and $n=(n_1,n_2)$)
\begin{equation}
\label{eq:formula_expo_Z2_NS}
     f(x)=e^{\zeta\cdot n}=\left\{e^{\zeta_1}\right\}^{n_1}\left\{e^{\zeta_2}\right\}^{n_2},
\end{equation}     
for $\zeta\in\partial D_t$. By definition of $\partial D_t$, see \eqref{eq:def_partial_D_t}, the pairs $(e^{\zeta_1},e^{\zeta_2})$ parametrize the zero-set of $\phi-t$.

To compare the elementary expression \eqref{eq:formula_expo_Z2_NS} for the exponential function to its elliptic analogue \eqref{eq:recursive_def_expo}, we should first introduce an elliptic uniformisation of the zero-set of the characteristic polynomial \eqref{eq:charac_poly_square}. As shown in \cite[Eq.~(32)]{BodTRa-17} (see also \eqref{eq:para}),
\begin{equation*}
     \{(z,w)\in (\mathbb C\cup\{\infty\})^2 : P(z,w)=0\}=\{(z(u),w(u)): u\in\TT(k)\},
\end{equation*}
with 
\begin{equation*}
     z(u)=-k' \sc\Bigl(\frac{u-\alpha}{2}\Bigr)\sc\Bigl(\frac{u-\beta}{2}\Bigr) 
     \quad \text{and} \quad w(u)= \frac{\sc(\frac{u-\beta}{2})}{\sc(\frac{u-\alpha}{2})}.
\end{equation*}
Moreover, using \eqref{eq:charac_poly_square} one obtains
\begin{equation}
\label{eq:formula_expo_Z2_iso}
     \expo_{(n_1,n_2)}(u) = \left\{i \sqrt{k'}\sc\Bigl(\frac{u-{\alpha}}{2}\Bigr)\right\}^{n_1-n_2} \left\{i \sqrt{k'}\sc\Bigl(\frac{u-{\beta}}{2}\Bigr)\right\}^{n_1+n_2}= z(u)^{n_1}w(u)^{n_2}.
\end{equation}
The comparison between \eqref{eq:formula_expo_Z2_NS} and \eqref{eq:formula_expo_Z2_iso} is now clear: in both cases they are products of the coordinates (raised to some power) of parameterizations of the characteristic polynomial.



\subsubsection*{Choice of the parameters}
Our main result is stated below; it shows that given arbitrary transition probabilities and 
any value of $t$ in the spectral interval, we can adjust the values of $\theta$ and $k$ so as to have the equivalence: $\widetilde P(z,w)=0$ if and only if $\phi(\log z,\log w)=t$. 
\begin{prop}
\label{prop:all_values_square_lattice}
Let $p_1$ and $p_2$ be defined by \eqref{eq:def_p1_p2}. For any $q_1,q_2,t$ such that $q_1,q_2>0$, $q_1+q_2=\frac{1}{2}$ and $t\geq1$, there exist $\theta\in(0,K)$ and $k\in[0,1)$ such that $p_1=q_1$, $p_2=q_2$ and $\frac{m+2(c_1+c_2)}{2(c_1+c_2)}=t$.
\end{prop}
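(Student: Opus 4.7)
The plan is to exploit the Jacobi reflection identity $\sc(K - u \mid k) = 1/(k' \, \sc(u \mid k))$ to parametrize $\theta$ by $k$, and then let $k$ vary from $0$ to $1$ to sweep out all values of $t \in [1, +\infty)$. Since $p_1 + p_2 = \tfrac{1}{2}$ from \eqref{eq:def_p1_p2} matches the hypothesis $q_1 + q_2 = \tfrac{1}{2}$, the two conditions $p_i = q_i$ collapse to the single ratio $c_1/c_2 = q_1/q_2 =: r > 0$. The reflection identity (which follows from $\sn(K-u) = \cn(u)/\dn(u)$ and $\cn(K-u) = k' \sn(u)/\dn(u)$) rewrites this as $k' \sc^2(\theta \mid k) = r$. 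Since $\sc(\cdot \mid k)$ is a continuous strictly increasing bijection $(0, K(k)) \to (0, +\infty)$ for every $k \in [0, 1)$, there is a unique $\theta(k) \in (0, K(k))$ with $\sc(\theta(k) \mid k) = \sqrt{r/k'}$, and $\theta(\cdot)$ depends continuously on $k$ (analytically on $(0,1)$ by the implicit function theorem).

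Along this curve, introduce
\begin{equation*}
\Phi(k) = \frac{m + 2(c_1 + c_2)}{2(c_1 + c_2)} = \frac{\Arm(\theta(k) \mid k) + \Arm(K(k) - \theta(k) \mid k)}{\sc(\theta(k) \mid k) + \sc(K(k) - \theta(k) \mid k)}.
\end{equation*}
The aim is to show that $\Phi \colon [0, 1) \to [1, +\infty)$ is continuous with $\Phi(0) = 1$ and $\Phi(k) \to +\infty$ as $k \to 1^-$, so that the intermediate value theorem furnishes, for every $t \geq 1$, a value of $k$ with $\Phi(k) = t$. Continuity follows at once from the analyticity of $K$, $E$, $\theta(\cdot)$ and of the elliptic functions in both variables. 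The value $\Phi(0) = 1$ comes from the degenerations $K(0) = E(0) = \pi/2$ and $\dc(v \mid 0) = \sec v$: these give $\Dc(u \mid 0) = \tan u$ and $\Arm(u \mid 0) = \tan u = \sc(u \mid 0)$, so $\Arm \equiv \sc$ at $k = 0$ and $m = 0$.

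The hard part will be the divergence $\Phi(k) \to +\infty$ as $k \to 1^-$. From $\sc(\theta \mid k) = \sqrt{r/k'}$ and the symmetry $\sc(K - u) \sc(u) = 1/k'$ one finds directly that the denominator equals $(r+1)/\sqrt{r k'}$. For the numerator, adding $\Arm(u \mid k) = (1/k')[\Dc(u) + (E - K) u/K]$ to its $K - u$ counterpart yields
\begin{equation*}
\Arm(\theta) + \Arm(K - \theta) = \frac{1}{k'}\bigl[\Dc(\theta) + \Dc(K - \theta) + E - K\bigr].
\end{equation*}
The identity $\dc^2 = 1 + k'^2 \sc^2$ gives $\Dc(u \mid k) = u + k'^2 \int_0^u \sc^2(v \mid k)\, dv$. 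As $k \to 1^-$, inverting $\sinh \theta \approx \sqrt{r/k'}$ (combined with $K(k) \sim \log(4/k')$) yields $\theta(k) \approx \tfrac{1}{2}\log(4 r/k')$ and $K(k) - \theta(k) \approx \tfrac{1}{2}\log(4/(r k'))$, so both remain comparable to $K/2$ and in particular bounded away from $0$ and $K$. The crude bound $k'^2 \int_0^\theta \sc^2 \, dv \leq k'^2\, \theta \, \sc^2(\theta \mid k) = k' r \, \theta = O(k' K) \to 0$ (and its $K-\theta$ analogue) then gives $\Dc(\theta) + \Dc(K - \theta) = K + o(1)$, and since $E(k) \to 1$, $\Arm(\theta) + \Arm(K - \theta) \sim 1/k'$. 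Matching orders, $\Phi(k) \sim \sqrt{r}/((r+1)\sqrt{k'}) \to +\infty$. Once the Jacobi reflection $\sc(K - u) \sc(u) = 1/k'$ is in hand, the rest is order matching in $k'$; the only delicate ingredient is the uniform control of $\sc(v \mid k)$ on $[0, \theta(k)]$, which I expect to be the main obstacle.
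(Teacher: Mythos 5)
Your proof is correct, and while it follows the same overall strategy as the paper (use $\sc(K-\theta)=1/(k'\sc(\theta))$ to reduce the probability constraint to $k'\sc^2(\theta)=r$, solve for $\theta(k)$, then vary $k$ over $[0,1)$ to sweep $t$ over $[1,\infty)$), the way you handle the mass equation is genuinely different. The paper invokes the addition identity $\Arm(\theta)+\Arm(K-\theta)=\ns(\theta)\dc(\theta)/k'$ from \cite[Eq.~(60)]{BodTRa-17} together with $k'^2\sc^2+k'^2=k'^2\nc^2=\dc^2-k^2$ to obtain the closed form $\Phi(k)=\sqrt{1+2q_1(1-2q_1)(k'-2+1/k')}$, from which monotonicity in $k$ is read off and explicit formulas for $k$ and $\theta$ in terms of $(q_1,t)$ follow. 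You instead work directly from the integral definition $\Dc(u)=\int_0^u\dc^2$, use $\dc^2=1+k'^2\sc^2$ and the monotonicity bound $k'^2\int_0^\theta\sc^2\leq k'^2\theta\,\sc^2(\theta)=rk'\theta=O(k'K)\to0$ to get $\Dc(\theta)+\Dc(K-\theta)=K+o(1)$, hence numerator $\sim 1/k'$ and $\Phi(k)\sim\sqrt{r}/\bigl((r+1)\sqrt{k'}\bigr)$; combined with $\Phi(0)=1$ (from $\Arm\equiv\sc$ at $k=0$) and the intermediate value theorem this gives existence. Your leading constant $\sqrt{r}/(r+1)=2\sqrt{q_1q_2}$ agrees with the $k'\to0$ behavior of the paper's closed form, which is a good consistency check. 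What you lose relative to the paper is uniqueness/monotonicity of the solution $k(t)$ and the explicit expressions \eqref{eq:exp_t_square-lattice}--\eqref{eq:exp_alpha_square-lattice}; what you gain is independence from the specific $\Arm$ addition identity of the companion paper, making the argument more self-contained and elementary. All the estimates you flag as delicate are in fact already closed by the bounds you give.
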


\begin{proof}
We first look at the equation $\frac{c_1}{2(c_1+c_2)}=q_1$. Since $c_2=\sc(K-\theta)=\frac{1}{k'\sc(\theta)}$ by \cite[16.8.9]{AbSt-64}, it is equivalent to
\begin{equation*}
     \frac{\sc(\theta)}{\sc(\theta)+\frac{1}{k'\sc(\theta)}}=2q_1,
\end{equation*}
which results in 
\begin{equation}
\label{eq:value_alpha}
     \sc(\theta)=\sqrt{\frac{2q_1}{1-2q_1}}\frac{1}{\sqrt{k'}},
\end{equation}
which for any $k\in[0,1)$ has a (unique) solution, see \eqref{eq:exp_alpha_square-lattice} for an explicit expression. Now we turn to the equation involving the mass. Note first that by \eqref{eq:mass},
\begin{equation*}
     m=2(\Arm(\theta)+\Arm(K-\theta))-2(c_1+c_2),
\end{equation*}
and by \cite[Eq.~(60)]{BodTRa-17}, $\Arm(\theta)+\Arm(K-\theta)=\frac{\ns(\theta)\dc(\theta)}{k'}$, so that 
\begin{equation*}
     \frac{m+2(c_1+c_2)}{2(c_1+c_2)}=\frac{\nc(\theta)\dc(\theta)}{1+k'\sc(\theta)^2}.
\end{equation*}
Let us now reproduce \cite[16.9.3]{AbSt-64}: ${k'}^2\sc^2+{k'}^2={k'}^2\nc^2=\dc^2-k^2$. These equalities allow us to derive the values of $\nc(\theta)$ and $\dc(\theta)$, starting from the value \eqref{eq:value_alpha} of $\sc(\theta)$. After some computations, we conclude that
\begin{equation*}
     \frac{m+2(c_1+c_2)}{2(c_1+c_2)}=\sqrt{1+2q_1(1-2q_1)\left(k'-2+\frac{1}{k'}\right)}.
\end{equation*}
As $k$ varies in $[0,1)$, the latter function increases from $1$ to $\infty$. In conclusion, $k$ is determined by solving the equation
\begin{equation*}
     \sqrt{1+2q_1(1-2q_1)\left(k'-2+\frac{1}{k'}\right)}=t,
\end{equation*}
see \eqref{eq:exp_t_square-lattice}, and then $\theta$ is found with \eqref{eq:value_alpha}. We can even be more explicit:
\begin{align}
     k&=\frac{t^2-8 q_1^2+4 q_1-1-\sqrt{(t-1) (t+1) (t+1-4 q_1) (t-1+4 q_1)}}{4 q_1 (1-2 q_1)},\label{eq:exp_t_square-lattice}\\
     \theta&=F\left(\sqrt{\frac{2q_1}{k'+2q_1(1-k')}},k\right),\label{eq:exp_alpha_square-lattice}
\end{align}
where $F$ is the incomplete elliptic integral of the first kind. The proof is completed.
\end{proof}

\subsubsection*{The triangular lattice}
The characteristic polynomial is (again by \cite[Sec.~5.2]{BodTRa-17})
\begin{equation*}
     P(z,w)=m+2(c_1+c_2+c_3)-c_1\left(z+\frac{1}{z}\right)-c_2\left(w+\frac{1}{w}\right)-c_3\left(zw+\frac{1}{zw}\right),
\end{equation*}
and our aim is to show an analogue of Proposition \ref{prop:all_values_square_lattice}. However, as the computations become much more complex in this case we will only consider here the case of uniform probabilities, corresponding to three equal conductances $c_1=c_2=c_3=\sc(\frac{K}{3})$. By \eqref{eq:mass}, the normalized characteristic polynomial takes the form
\begin{equation*}
     \widetilde P(z,w)=\frac{\Arm(\frac{K}{3})}{\sc(\frac{K}{3})}-\frac{1}{6}\left(z+\frac{1}{z}\right)-\frac{1}{6}\left(w+\frac{1}{w}\right)-\frac{1}{6}\left(zw+\frac{1}{zw}\right).
\end{equation*}

\begin{prop}
\label{prop:all_values_triangular_lattice}
As $k$ varies in $[0,1)$, $\Arm(\frac{K}{3})/\sc(\frac{K}{3})$ varies continuously in $[1,\infty)$.
\end{prop}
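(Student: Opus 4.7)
The plan is to prove that the continuous function $F(k):=\Arm(K(k)/3|k)/\sc(K(k)/3|k)$ defined on $[0,1)$ takes values in $[1,\infty)$ and attains every such value. Specifically, I would show that $F$ is continuous on $[0,1)$, $F(0)=1$, $F\geq 1$ throughout, and $F(k)\to \infty$ as $k\to 1^-$; the intermediate value theorem will then give $F([0,1))=[1,\infty)$.

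Continuity of $F$ follows from the analyticity of $k\mapsto K(k), E(k)$ on $[0,1)$ and the joint continuity of the Jacobi functions $\sc(u|k)$ and $\dc^2(u|k)$ (and hence $\Dc(u|k)=\int_0^u \dc^2(v|k)\,\ud v$) on $\{(u,k):0\leq u<K(k),\ 0\leq k<1\}$. The equality $F(0)=1$ is direct: at $k=0$ we have $K=\pi/2$, the corrective term $(E-K)u/K$ vanishes, $\dc(v|0)=\sec v$ gives $\Dc(u|0)=\tan u$, and thus $\Arm(u|0)=\sc(u|0)=\tan u$. The lower bound $F\geq 1$ comes from applying the mass formula \eqref{eq:mass} at any vertex of the uniform triangular isoradial embedding, at which all six incident edges share the same angle $\theta=K/3$: this yields $m^2(x|k)=6(\Arm(K/3|k)-\sc(K/3|k))$, and non-negativity of the squared mass (a feature of the $Z$-invariant massive Laplacian on isoradial graphs) gives $\Arm(K/3|k)\geq \sc(K/3|k)$.

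The main obstacle will be showing $F(k)\to\infty$ as $k\to 1^-$, because the evaluation point $K/3$ itself tends to infinity. My approach is to use the antiderivative identity
\begin{equation*}
\Dc(u|k)=u+\dn(u|k)\sc(u|k)-E(\mathrm{am}(u|k),k),
\end{equation*}
which is immediately verified by differentiation using $\mathrm{am}'(u)=\dn(u)$ and $\dc^2=1+k'^2\sc^2$. It recasts the quantity of interest as
\begin{equation*}
\frac{\Arm(u|k)}{\sc(u|k)}=\frac{1}{k'}\left[\dn(u|k)+\frac{uE/K-E(\mathrm{am}(u|k),k)}{\sc(u|k)}\right].
\end{equation*}
Specialising to $u=K(k)/3$ and invoking the classical asymptotics $K(k)\sim \log(4/k')$ and $E(k)\to 1$ together with the triplication identity $\sn(3u|k)=1$ at $u=K/3$, I would derive $\sn(K/3|k)\to 1$, $\cn(K/3|k)$ and $\dn(K/3|k)$ of order $k'^{1/3}$, $\sc(K/3|k)\sim c_1 k'^{-1/3}$, $\mathrm{am}(K/3|k)\to \pi/2$ and $E(\mathrm{am}(K/3|k),k)\to 1$. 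Substituting produces $F(k)\sim c\cdot k'^{-2/3}\to \infty$ for a positive constant $c$. Combined with the continuity, lower bound and $F(0)=1$ established above, the intermediate value theorem then yields $F([0,1))=[1,\infty)$, completing the proof.
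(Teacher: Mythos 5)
Your route is sound and genuinely different from the paper's. The paper proceeds algebraically: it derives the quartic $k^2s^4-2k^2s^3+2s-1=0$ for $s=\sn(\frac{K}{3})$ from $\sn(\frac{K}{3}+\frac{2K}{3})=1$, expresses both $\Arm(\frac{K}{3})$ and $\sc(\frac{K}{3})$ algebraically in $s$, obtains a quartic \eqref{eq:alg_t} satisfied by $t=\Arm(\frac{K}{3})/\sc(\frac{K}{3})$, and reads off $t(0)=1$ and the blow-up rate $t\sim(1-k)^{-1/3}$ from that polynomial (with the lower bound $t\geq1$ coming from the positivity of the Taylor coefficients, proved in the Appendix). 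You instead work transcendentally with the antiderivative identity for $\Dc$ and the $k\to1$ degeneration of the elliptic functions, and you get the lower bound from the non-negativity of the squared masses (a result of \cite{BodTRa-17}), which is a clean shortcut the paper does not use in the main text. What the paper's algebraic reduction buys is that the endpoint behaviour is read off a single explicit polynomial; what your approach buys is that it avoids the ``miraculous'' algebraicity of $t$ and stays close to the analytic meaning of the quantities. Note, though, that you still implicitly need the paper's equation \eqref{eq:alg_s} (or its $k=1$ factorization $(s-1)^3(s+1)$) to justify $1-\sn(\frac{K}{3})\asymp k'^{2/3}$, so the two proofs share that ingredient.

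One step in your plan deserves explicit verification rather than the phrase ``substituting produces'': in
\begin{equation*}
F(k)=\frac{1}{k'}\left[\dn\Bigl(\frac{K}{3}\Bigr)+\frac{\frac{K}{3}\cdot\frac{E}{K}-E(\mathrm{am}(\frac{K}{3}),k)}{\sc(\frac{K}{3})}\right],
\end{equation*}
\emph{both} bracketed terms are of order $k'^{1/3}$ and they have opposite signs: with $1-s\sim(k'^2/2)^{1/3}$ one finds $\dn(\frac{K}{3})\sim 2^{1/3}k'^{1/3}$, while the numerator of the second term tends to $\frac13-1=-\frac23$ and $\sc(\frac{K}{3})\sim 2^{-1/3}k'^{-1/3}$, giving $-\frac23\cdot2^{1/3}k'^{1/3}$. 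The net constant is $(1-\frac23)2^{1/3}=\frac{2^{1/3}}{3}>0$, so $F(k)\sim\frac{2^{1/3}}{3}k'^{-2/3}$ and your conclusion holds (and matches the paper's $(1-k)^{-1/3}$ rate), but had the constants come out differently the leading terms could have cancelled; this positivity check is the one piece of the argument you cannot wave through.
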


\begin{proof}
Let us first prove that $t=\Arm(\frac{K}{3})/\sc(\frac{K}{3})$ is an algebraic function of $k$, solution to
\begin{equation}
\label{eq:alg_t}
     -27(1-k^2)t^4+18(1-k^2)t^2+2(2-k^2)^2t+1-k^2+k^4=0.
\end{equation}
The first terms of its series expansion are
\begin{equation*}
     t=1+\frac{3}{64}k^4+\frac{3}{64}k^6+\frac{711}{16384}k^8+\frac{327}{8192}k^{10}+O(k^{12}),
\end{equation*}
and further computations suggest that all non-zero coefficients are positive
(see Appendix~\ref{sec:app_AB} for a proof).
Remark that the algebraicity of $t$ is not obvious, as both functions $\Arm$ and $\sc$ are transcendental.

We start by showing that $s=\sn(\frac{K}{3})$ is the unique power series solution to
\begin{equation}
\label{eq:alg_s}
     k^2s^4-2k^2s^3+2s-1=0.
\end{equation}
The first terms in its expansion are
\begin{equation*}
     s=\frac{1}{2}+\frac{3}{32}k^2+\frac{3}{64}k^4+\frac{123}{4096}k^6+\frac{177}{8192}k^8+\frac{34887}{2097152}k^{10}+O(k^{12}).
\end{equation*}
The coefficients of $s$ are again non-negative (see Appendix~\ref{sec:app_AB} for a proof).

To prove \eqref{eq:alg_s} we start from the equality $\sn(K)=1$ that we rewrite as $\sn(\frac{K}{3}+\frac{2K}{3})=1$, and then we apply the classical addition formula \cite[16.17]{AbSt-64} together with some identities in Exercise 22 in \cite[Chap.~2]{La-89}.

The second step is to show that both $\Arm(\frac{K}{3})$ and $\sc(\frac{K}{3})$ may be algebraically expressed in terms of $s$. By definition, $\sc^2=(1/\sn^2-1)^{-1}$, so that $\sc(\frac{K}{3})=(1/s^2-1)^{-1/2}$. Now, using \cite[(60)~and~(61)]{BodTRa-17}, we obtain that 
\begin{equation*}
     \Arm\left(\frac{K}{3}\right)=\frac{\sqrt{1-k^2s^2}}{3s^2\sqrt{1-k^2}}\left(1-\frac{2(1-k^2)s^4}{1-(2-k^2s^2)s^2}\right).
\end{equation*}
Some computations (which we do not reproduce here) finally lead to \eqref{eq:alg_t}.

We prove Proposition \ref{prop:all_values_triangular_lattice}. Computing the
discriminant of the polynomial in \eqref{eq:alg_t}, it is easy to see that for
any $k\in(0,1)$, two roots are real and the other two are non-real, complex conjugate. Moreover, the four roots are obviously non-zero. So the solution $t=t(k)$ is such that $t(0)=1$ (evaluate \eqref{eq:alg_t} at $k=0$). Doing the change of variable $t\to \frac{t}{(1-k^2)^{1/3}}$ in \eqref{eq:alg_t}, we deduce that $t(k)$ behaves as $\frac{1}{(1-k)^{1/3}}$ as $k\to1$, and in particular goes to $\infty$.
\end{proof}

\section{Minimal positive harmonic functions}
\label{sec:minimal_harmonic_functions}

From the construction of Martin boundary, any positive harmonic function $f$ can be
written as an integral of the exponential functions against some Radon measure
$\mu$ supported on $2iK'+\mathbb{R}/4K\mathbb{Z}$:
\begin{equation}
\label{eq:f_expo}
  \forall x\in\Gs,\quad f(x)=\int \expo_{(x,x_0)}(u)\mu(\ud u).
\end{equation}
Let us recall that positive harmonic functions are particularly important from a potential theory point of view, in relation with the concept of Doob transform. The function $f$ is then said to be minimal if we cannot find a non-trivial
measure $\mu$ for this decomposition, i.e., whose support is not just a
singleton; see Section~\ref{subsec:NS}.

In this section, we prove that the
exponential functions $x\mapsto \expo_{(x,x_0)} (u)$ with
$u=2iK'+v$ and $v\in\mathbb{R}$ are minimal, which provides a proof of
Theorem~\ref{thm:main-3}. This implies in particular that
these exponential functions form the minimal Martin boundary
of the associated killed random walk.

Recall that any isoradial graph can be viewed as a step surface in a hypercubic
lattice $\mathbb{Z}^d$, where $d$
 represents the
number of possible orientations for the unit vectors representing edges of the
diamond graph, and that the exponential function is naturally extended to
$\mathbb{Z}^d$; see Sections~\ref{sec:iso_def} and~\ref{subsec:monotone}.

To each point $\pps=\sum_j n_j e_j$ in the $L^1$-unit ball of
$\mathbb{R}^d$, we associate a probability measure
$\nu_{\pps}$ on the
circle $\mathbb{R}/4K\mathbb{Z}$:
\begin{equation*}
  \nu_{\pps} = \sum_{j=1}^d n_j^+ \delta_{\alpha_j} +
    n_j^-\delta_{\alpha_j+2K},
\end{equation*}
where $n_j^\pm=\max\{\pm n_j,0\}$
represents the asymptotic proportion of steps
$\pm e_j$ in a minimal path from a reference vertex $x$ to $y$, as $y$ tends to
infinity in the direction $\pps$.
Because of the monotonicity of the surface, $n_j^+$ and $n_j^-$ cannot be both
strictly positive. Therefore,
the limiting measures $\nu$ have a support included in half of the
circle: there is an interval of length not smaller than $2K$ with $\nu$-measure
$0$ (see also Section \ref{subsec:ex_as_ex}).

Fix $u=2iK'+v$ with $v$ real.
An important quantity of interest is the rate of growth of the massive
exponential function in the asymptotic direction $\pps$:
\begin{equation*}
  \tau(\pps,u)=\lim_{y\to\pps\infty} \frac{1}{N}\log \vert
  \expo_{(x_0,y)}(u)\vert = \int_{\mathbb{R}/4K\mathbb{Z}}
  \log\left\vert\sqrt{k'}\sc\Bigl(\frac{u-\alpha}{2}\Bigr)\right\vert\nu_{\pps}(\ud\alpha)=\chi(v),
\end{equation*}
see~\eqref{eq:def_chi}.
Because $\sqrt{k'}\sc(u+K)=\frac{1}{\sqrt{k'}\sc(u)}$, the function $\tau$
can be rewritten as
\begin{equation*}
  \tau(\pps,u)
  = \int_{\mathbb{R}/4K\mathbb{Z}}
  \log\left\vert\sqrt{k'}\sc\Bigl(\frac{u-\alpha}{2}\Bigr)\right\vert{\tilde\nu}_{\pps}(\ud\alpha),
\end{equation*}
where $\tilde{\nu}_{\pps} = \sum_{j=1}^d n_j \delta_{\alpha_j}$ is
now a signed measure supported on the angles associated to the basis vectors
$e_1,\ldots, e_d$. It is then clear that
$\tau(-\pps,u)=-\tau(\pps,u)$. Even more, if we extend
by homogeneity the definition of $\tilde{\nu}$ to all vectors of $\mathbb{R}^d$
by $\tilde{\nu}_{\lambda\pps} = \lambda\tilde{\nu}_{\pps}$ for
$\lambda\in\mathbb{R}$,
then $\tau(\cdot,u)$ becomes a non-degenerate linear form, whose kernel is a
hyperplane separating the unit sphere in two parts; see Figure \ref{fig:sphere_inter} for an example.

\begin{figure}
  \centering
  \includegraphics[width=7cm]{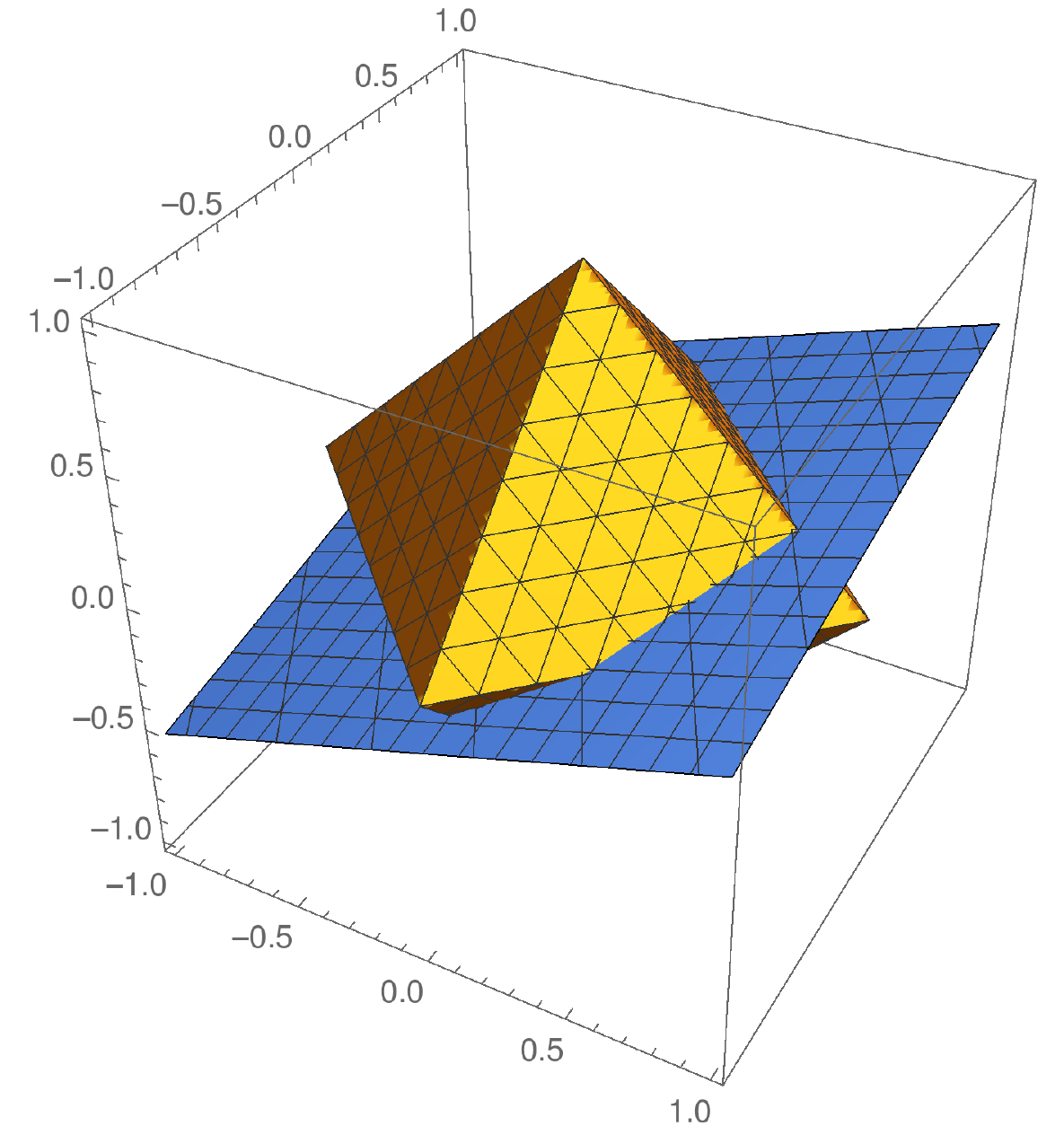}
  \caption{The unit sphere intersected by the kernel of $\tau$ in dimension $3$.}
  \label{fig:sphere_inter}
\end{figure}

\begin{lem}
  The $L^1$-unit sphere is separated in two (connected) hemispheres, 
  corresponding to positive and negative values of $\tau(\cdot,u)$.
\end{lem}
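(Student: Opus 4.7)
The proof plan is to verify the remaining assertion, namely the connectedness of the two parts cut out by the kernel of $\tau(\cdot,u)$; linearity and non-degeneracy of this form have already been recorded in the paragraph preceding the lemma. Let me denote by $H=\ker\tau(\cdot,u)$ the kernel hyperplane through the origin and by $H_+,H_-$ the two open half-spaces on which $\tau(\cdot,u)$ is respectively positive and negative. Each $H_\pm$ is convex, hence connected and path-connected.

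The main idea is to transport the question to the ordinary Euclidean sphere, where it is classical. Let $S_1$ be the $L^1$-unit sphere of $\mathbb R^d$ and $S_2$ the $L^2$-unit sphere. The radial projection
\begin{equation*}
\Phi\colon S_1\longrightarrow S_2,\qquad x\longmapsto \frac{x}{\|x\|_2}
\end{equation*}
is a homeomorphism (its inverse is $y\mapsto y/\|y\|_1$), and because it is positively homogeneous it preserves the sign of any linear form. In particular $\Phi$ sends $S_1\cap H_\pm$ homeomorphically onto $S_2\cap H_\pm$. It is therefore enough to show that each of the sets $S_2\cap H_+$ and $S_2\cap H_-$ is connected.

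For the Euclidean case I would observe that $S_2\cap H_+$ is the image of $H_+\setminus\{0\}$ under the continuous map $x\mapsto x/\|x\|_2$. Since $H_+$ is convex and the origin is not in $H_+$, the set $H_+\setminus\{0\}$ is convex, hence path-connected, so its continuous image $S_2\cap H_+$ is connected as well; the same reasoning applies to $H_-$. Pulling back via $\Phi^{-1}$, the two pieces $S_1\cap H_\pm$ are connected, which proves the lemma.

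I do not anticipate a real obstacle here: the only step that one must not gloss over is the fact that $\tau(\cdot,u)$ is genuinely non-zero as a linear form, so that $H$ is a proper hyperplane and the two open half-spaces are non-empty. This is precisely the non-degeneracy asserted just above the lemma (it can be seen directly from the fact that the integrand $\log|\sqrt{k'}\sc(\tfrac{u-\alpha}{2})|$ is not identically zero as $\alpha$ varies, so that its values at the distinct $\alpha_j$'s cannot all vanish at the same $u$). Once this is accepted, the argument reduces to the standard topological fact that a hyperplane through the origin cuts a topological sphere into two open hemispheres.
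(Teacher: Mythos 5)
Your argument is correct and is essentially the paper's: the lemma is presented there as an immediate consequence of the preceding observation that $\tau(\cdot,u)$ extends to a non-degenerate linear form on $\mathbb{R}^d$, so that its kernel is a hyperplane through the origin whose two open half-spaces meet the $L^1$-sphere in two connected hemispheres, and you merely spell out the standard connectedness step via radial projection and convexity. One small caveat: your parenthetical justification of non-degeneracy is not a proof as written --- a function of $\alpha$ that is not identically zero can still vanish at the $d$ prescribed points $\alpha_1,\dots,\alpha_d$ --- but since you ultimately invoke the non-degeneracy asserted in the paragraph preceding the lemma (which the paper itself states without proof), this does not affect the argument.
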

  Informally, this lemma says that for any $u$ in the torus, $\expo_{(x_0,y)}$ tends
  exponentially fast to $0$ for half of the directions to infinity. This is in
  particular true for the positive exponential functions for which $u=2iK'+v$,
  with $v\in\mathbb{R}$.
We claim that this property is characterizing the massive exponential functions
for $u=2iK'+v$ with $v$ real, among all positive massive harmonic functions:

\begin{prop}
  \label{prop:extr_harmo}
  If $h$ is a positive harmonic function which tends to $0$ in at
  least half of the directions, then it is proportional to an exponential
  function.
\end{prop}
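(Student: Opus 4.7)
The plan is to use the Martin integral representation \eqref{eq:f_expo} to write $h(x)=\int \expo_{(x,x_0)}(u)\,\mu(\ud u)$ for a positive Radon measure $\mu$ on the circle $\mathcal{C}=2iK'+\mathbb{R}/4K\mathbb{Z}$, and to show that $\mu$ must be a Dirac mass; the conclusion $h(x)=c\,\expo_{(x,x_0)}(u^*)$ then follows at once.

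The first step is to convert the decay assumption into a support constraint on $\mu$. As $y\to\infty$ in a direction $\pps$ on the $L^1$-unit sphere, one has $|\expo_{(x_0,y)}(u)|=\exp(N\,\tau(\pps,u)+o(N))$ with $N=|y-x_0|$. Since $\expo_{(x_0,y)}(u)\geq 0$ for $u\in\mathcal{C}$ (positivity of \eqref{eq:f_expo}) and $\mu\geq 0$, any mass $\mu$ placed on a neighborhood of some $u^{\circ}$ with $\tau(\pps,u^\circ)>0$ would force $h(y)$ to grow exponentially along $\pps$, contradicting $h\to 0$ on the decay set $D$. A standard uniform-estimate argument (using continuity of $\tau(\pps,\cdot)$ to isolate such a neighborhood) yields
\begin{equation*}
\operatorname{supp}\mu\subseteq \bigcap_{\pps\in D}\{u\in\mathcal C:\pps\in H^-(u)\}=\{u\in\mathcal C:D\subseteq H^-(u)\}.
\end{equation*}
By the preceding lemma each $H^-(u)$ is a closed hemisphere of measure exactly $\tfrac12$; combined with the hypothesis that $D$ has measure at least $\tfrac12$ and $D\subseteq H^-(u)$, this forces $D=H^-(u)$ up to a null set, for every $u\in\operatorname{supp}\mu$.

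The main obstacle is then the injectivity of the map $u\mapsto H^-(u)$ from $\mathcal C$ to the set of hemispheres of the $L^1$-sphere. Writing $u_i=2iK'+v_i$, two points yield the same hemisphere iff the linear forms $\tau(\cdot,u_i)$ have the same kernel hyperplane in $\mathbb{R}^d$, iff the coefficient vectors $(f_j(v_i))_{j=1}^d$ with $f_j(v)=\log|\sqrt{k'}\sc((v-\alpha_j)/2)|$ are positively proportional. I would establish injectivity by exploiting the singular structure of $f_j$ (going to $-\infty$ at $v=\alpha_j$ and to $+\infty$ at $v=\alpha_j+2K$) together with the strict monotonicity of $\frac{\sn\cdot\cn}{\dn}$ on $[-K,K]$ and the strict ordering \eqref{eq:ordering_angles} of the angles, echoing the argument showing $\frac{dv_0}{d\ps}>0$ in the proof of Theorem~\ref{thm:main-2}: the same strict ordering that rules out degeneracies of the critical-point map there also rules out proportionality of the coefficient vectors for $v_1\neq v_2$.

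Once injectivity is established, $\operatorname{supp}\mu$ reduces to at most a single point $u^*$, so $\mu=c\,\delta_{u^*}$ and $h(x)=c\,\expo_{(x,x_0)}(u^*)$. The first two steps use only general measure-theoretic reasoning and the growth-rate estimate, while the injectivity step requires the specific analytic structure of the elliptic building blocks.
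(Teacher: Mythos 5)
Your plan follows essentially the same route as the paper: write $h$ via the Martin representation \eqref{eq:f_expo}, use the positivity of the integrand to show that the decay hypothesis forces $\operatorname{supp}\mu$ to consist of points $u$ whose growth half-space is contained in the complement of the decay set, and conclude that the support is a singleton. In fact you are more careful than the paper's four-sentence proof, which silently uses the injectivity of $u\mapsto H^-(u)$ (distinct $u$ must give distinct half-spaces, otherwise the union of growth regions could still be exactly a half-space); you correctly identify this as the point needing an argument. One factual slip in your sketch of that step: on the relevant circle $u=2iK'+v$ the coefficients $\log\bigl\vert\sqrt{k'}\sc\bigl(\frac{u-\alpha_j}{2}\bigr)\bigr\vert=\tfrac12\log k'-\log\dn\bigl(\frac{v-\alpha_j}{2}\bigr)$ are \emph{bounded} (the zeros and poles of $\sc$ sit on the real circle, not on $2iK'+\mathbb{R}$), so there is no blow-up at $v=\alpha_j$ or $v=\alpha_j+2K$ to exploit — only a minimum and a maximum there; the correct ingredient is the other one you name, namely the strict monotonicity of $\frac{\sn\cdot\cn}{\dn}$ together with the ordering \eqref{eq:ordering_angles}, which yields the strict convexity of the curve $v\mapsto(g_j(v))_j$ (equivalently of the oval) and hence that no two distinct $v$ give positively proportional coefficient vectors.
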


\begin{proof}
  Write $h$ as an integral over the positive exponential function for some
  positive measure $\mu$ as in~\eqref{eq:f_expo}. Therefore, the set of directions along which $h$ goes to
  infinity is the union of the half-spaces for which $\expo_{(x_0,y)}(u)$ goes to
  infinity, for $u$ in the support of $\mu$. If $\mu$ is non-trivial then this
  is strictly more than a half-space, which is impossible by hypothesis.
  Therefore $\mu$ should have a support reduced to a singleton, and $h$ is
  proportional to an exponential function.
\end{proof}

\begin{proof}[Proof of Theorem \ref{thm:main-3}]
  If $h$ is less or equal to $h_v$, then it goes to 0 at least in half of the
  directions (the ones where $h_v$ itself goes to 0). We can therefore apply
  Proposition~\ref{prop:extr_harmo} to conclude.
\end{proof}

\begin{figure}
  \centering
  \includegraphics[width=12cm]{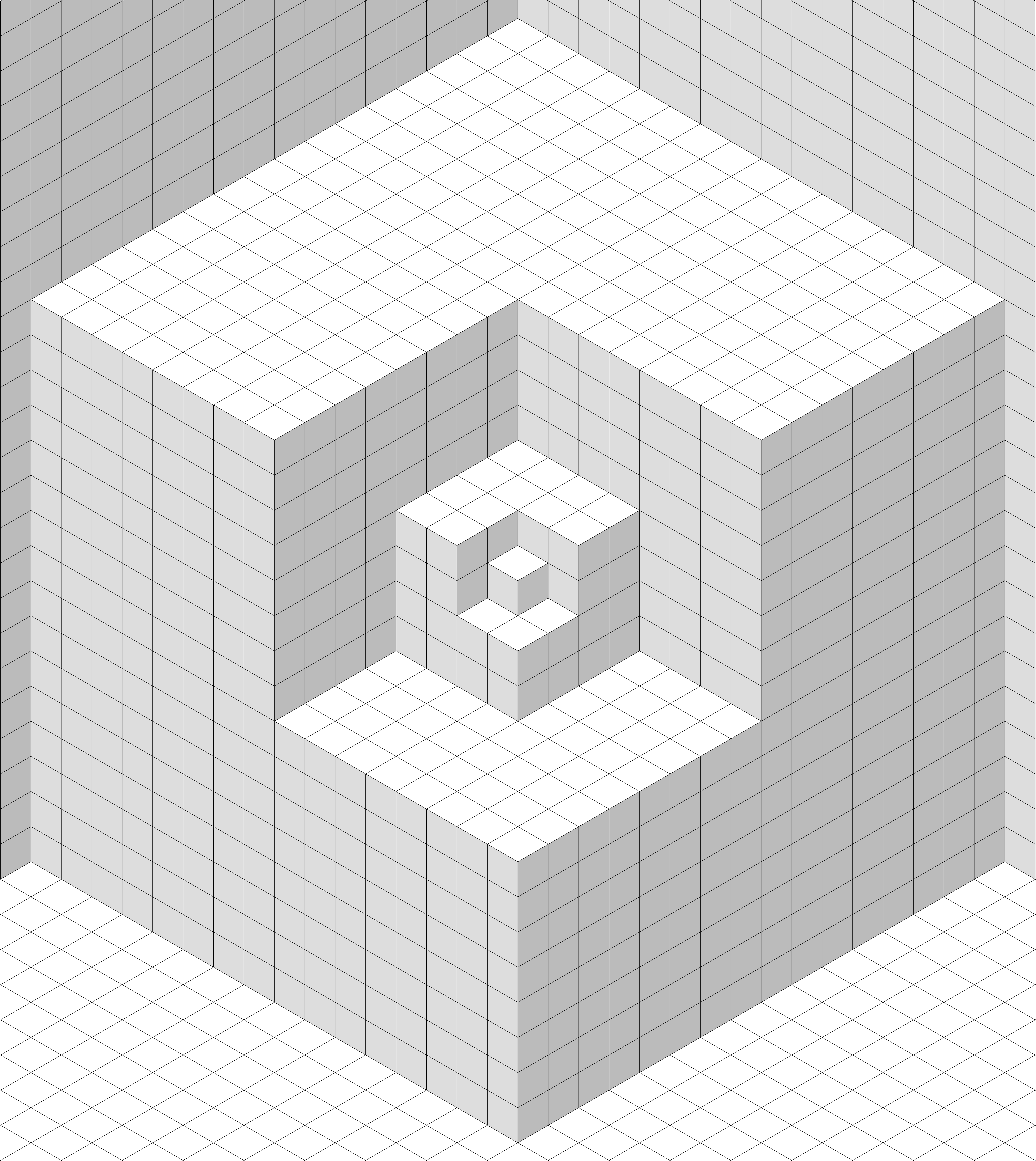}
  \caption{A piece of the diamond graph of an isoradial graph which is not
    asymptotically flat. From the center, there are ``waves'' of size growing
    exponentially. When going to infinity in a direction in the plane, the
    proportions of the different types of steps do not converge, but instead
  oscillate.}
  \label{fig:waves}
\end{figure}

\appendix
\section{Some thoughts on a positivity question (by Alin Bostan)}
\label{sec:app_AB}

We consider the following question: let $s(k)=\sum_{n \geq  0} s_n k^n$, 
starting
\begin{equation*}     s(k) =\frac{1}{2}+\frac{3}{32}k^2+\frac{3}{64}k^4+\frac{123}{4096}k^6+\frac{177}{8192}k^8+\frac{34887}{2097152}k^{10}+\cdots,
\end{equation*}
be the unique power series solution of the algebraic equation
\begin{equation}
\label{eq:alg-s}
     k^2s^4-2k^2s^3+2s-1=0.
\end{equation}
We want to prove that all the coefficients of $s(k)$ are non-negative,
and more precisely that all the coefficients $s_{2n}$ are positive.

Note that the sequence $(s_n)_{n \geq  0}$ satisfies various types of
recurrences, either non-linear,
obtained by extracting the coefficient of $k^n$
in both sides of~\eqref{eq:alg-s}, or even linear, obtained by using a linear
differential equation (with polynomial coefficients in~$k$) satisfied by
$s(k)$. However, these recurrences cannot be used directly to solve our
positivity question. For instance, from the algebraic
equation~\eqref{eq:alg-s}, Cockle's algorithm (see~\cite{Issac} and the
references therein) proves that $s(k)$ satisfies the 3rd order linear inhomogeneous differential equation
\begin{multline}
9\,{k}^{3} \left( k-1 \right) ^{2} \left( k+1 \right) ^{2}
s''' \left( k \right)
+9\,{k}^{2} \left( k-1 \right)  \left( k+1 \right)  \left( 5\,{k}^{2}-1 \right) s''
 \left( k \right) \label{eq:diffeqS} \\
+k \left( 35\,{k}^{4}-14\,{k}^{2}-13 \right) s'
 \left( k \right) 
-4\, \left( {k}^{2}-2 \right) s \left( k \right) 
= 4-2\,{k}^{2}, 
\end{multline}
from which a coefficient extraction proves that $(s_n)_{n \geq  0}$ satisfies the linear recurrence 
\begin{multline}
	\left( 3\,n+10 \right)  \left( 3\,n+14 \right)  \left( n+2 \right) s_{n+4}
+
n \left( 3\,n+2 \right)  \left( 3\,n+4 \right) s_n  \label{eq:recS}\\
= 2\, \left( 9\,{n}^{3}+54\,{n}^{2}+106\,n+70 \right) s_{n+2}. 
\end{multline}
It is not obvious from this recurrence how to derive a 
non-negativity proof
because of the plus sign in front of the coefficient of $s_n$. In principle,
two quite general methods might be applied to this kind of question (alone, or
combined): one relying on (effective) asymptotics for the coefficients of
algebraic functions~\cite[p.~504--505]{FlSe09}, the other relying on the
approach of~\cite[\S 4.2]{KP-10}. Instead, we present two different proofs.

The first one is based on the following 
 hypergeometric expression for $s(k)$:
\begin{equation}\label{eq:closed}
 s(k)=\frac{1}{2}+  \frac{3}{2} \cdot 
\left(
\frac{k}{4} \cdot
\pFq{2}{1}{\frac12,\frac56}{\frac53}{k^2}  \right)^2,
\end{equation}
where the Gauss' hypergeometric series~$\pFqcomma$ with parameters 
$\frac12, \frac56$ and $\frac53$ is defined by
\begin{equation}\label{eq:2F1}
\pFq{2}{1}{\frac12,\frac56}{\frac53}{k} = \sum_{n=0}^\infty \frac{(\frac12)_n(\frac56)_n}{(\frac53)_n} \, \frac
{k^n} {n!}
= 1+{\frac{1}{4}}k+{\frac{33}{256}}{k}^{2}+{\frac{85}{1024}}{k}^{3}
+\cdots
,
\end{equation}
and $(x)_n$ denotes the Pochhammer symbol $(x)_n=x(x+1)\cdots(x+n-1)$ for $n\in\mathbb{N}$.

Once found (e.g., using algorithmic tools for automated
guessing~\cite{Kauers18}, or for differential equation solving~\cite{IH-17}),
equality~\eqref{eq:closed} can easily be proved using closure properties of
D-finite functions~\cite{Stanley80}: from the 2nd order linear differential
equation satisfied by $\pFq{2}{1}{\frac12,\frac56}{\frac53}{k}$, one computes
a 3rd order differential equation satisfied by the right-hand side
of~\eqref{eq:closed}, which appears to coincide with the differential
equation~\eqref{eq:diffeqS} satisfied by the left-hand side
of~\eqref{eq:closed}. Therefore, the coefficients sequences of both right-hand
side and left-hand side of~\eqref{eq:closed} satisfy the recurrence
relation~\eqref{eq:recS}, hence they are equal since their initial terms
coincide.

A shorter proof of equality~\eqref{eq:closed}, with a different (more
geometric) flavor, goes as follows: on the one hand, the hypergeometric
function in~\eqref{eq:2F1} satisfies the algebraic
transformation\footnote{This is inspired by the Darboux covering for
tetrahedral hypergeometric equations of the Schwarz type $(1/3, 1/3,
2/3)$~\cite[\S6.1]{Vidunas08}, see also~\cite{Vidunas09}. Note that the same
change of variables has been used in~\cite[\S4]{BKR17}.}
\[
\pFq{2}{1}{\frac12,\frac56}{\frac53}{{\frac {x \left( x+2 \right) ^{3}}{ \left( 2\,x+1 \right) ^{3}}}}
=4\,{\frac { \left( 2\,x+1 \right) ^{\frac32}}{ \left( x+2 \right) ^{2}}}.\]
On the other hand, it is easy to prove, starting from the polynomial~\eqref{eq:alg-s}, that the algebraic function $s(k)$ satisfies
\[
s \left( {\frac {x^\frac12 \left( x+2 \right) ^{\frac32}}{ \left( 2\,x+1 \right) ^{\frac32}}} \right) = {\frac {2\,x+1}{x+2}}.\]
Denoting $\displaystyle{\varphi(x) = { {x \left( x+2 \right) ^{3}} / { \left( 2\,x+1 \right) ^{3}}}}$,
identity~\eqref{eq:closed} becomes trivial in the new variable~$x$:
\[{\frac {2\,x+1}{x+2}}=\frac12+\frac{3}{2} \cdot \varphi(x)
 \cdot \left( {\frac { \left( 2\,x+1 \right) ^{3/2}}{ \left( x+2 \right) ^{2}}}\right)^2.\]
Now that~\eqref{eq:closed} is proved, the positivity of the coefficients 
$s_{2n}$ of $s(k)$ is a direct consequence of the obvious fact that the 
hypergeometric power series in~\eqref{eq:2F1} has positive coefficients.

\smallskip The second proof is based on the so-called \emph{Stieltjes
inversion formula}~\cite[Lecture~2]{NiSp06}, that allows to express in some
cases the $n$-th term of a sequence as the $n$-th moment of a positive
density. Using this formula, one can deduce the following expression:
\[
s_{2n}=
\frac{\sqrt{3}}{\pi}\cdot
\int_{0}^1
k^n\cdot
H_1(k) \cdot
\left(
\frac{1}{{2}^{4/3}}\cdot
{\frac {H_2(k)}{{k}^{2/3}}}
-\frac{1}{{2}^{8/3}}
\cdot
\frac {H_1(k)}{{k}^{1/3}}
\right)\ud k,
\]
where $H_1(k)$ and $H_2(k)$ are the hypergeometric functions
\[H_1 = \pFq{2}{1}{\frac16,\frac56}{\frac43}{k} = 1+{\frac{5}{48}}k+\cdots, 
\quad H_2 = \pFq{2}{1}{-\frac16,\frac12}{\frac23}{k} 
=1-{\frac{1}{8}}k-{\frac{3}{64}}{k}^{2}-\cdots.
\]
At this point, positivity is not yet apparent. However, using the change of variables $k=\varphi(x)$, and using the fact that $H_1(\varphi(x))$ and $H_2(\varphi(x))$ become the simple algebraic functions
$2\,{ {{(x+1)^{1/3}} {(2\,x+1)^{1/2}}}/{(x+2)}}$ and $(2\,x+1)^{-1/2}$, the previous expression simplifies to:
\begin{equation}\label{eq:closed2}
	s_{2n} =  \frac{\sqrt {3} \cdot \sqrt [3]{2}}{\pi} \cdot \int_{0}^1 {\frac { \left( x-1 \right) ^{2} \left( \sqrt [3]{2}-\sqrt [3]{x^2+x} \right) \sqrt [3]{x+1}}{ \left( x+2
 \right)  \left( 2\,x+1 \right) ^{2}{x}^{2/3}}}  \varphi(x)^n\ud x.
\end{equation}
Once found, equality~\eqref{eq:closed2} can again be proved automatically using algorithmic
tools, notably the method of ``creative
telescoping''~\cite{AlZe90,Koutschan13,BDS16}.

Note that Lagrange inversion is a tempting alternative to the 
previous moment approach; however, in our case, it gives that for all $n>0$,
\[
s_{2n} = \frac{1}{2n} \cdot [z^{-1}] \left( {\frac { \left( z+1 \right) ^{3} \left( 3-z \right) }{16\, z}} \right) ^{n}
= \frac{(-1)^n}{{n\cdot {2}^{4n+1}}} \cdot \sum _{i=0}^{n}{3\,n\choose i}{n\choose i+1} \left( -3 \right) ^{i+1},
\]
and positivity is not clear on any of these expressions.

In contrast, identity~\eqref{eq:closed2} obviously proves that the sequence
$(s_{2n})_{n\geq 0}$ is positive. And it actually proves much more, namely
that $(s_{2n})_{n\geq 0}$ is a \emph{Stieltjes-Hausdorff moment sequence}, and
in particular that it is log-convex, i.e., $ \, s_{2n+2}s_{2n-2} \, \geq
s_{2n}^2$ for all $n\geq 1$~\cite[\S2]{Sokal} and $\Delta^k s_{2n} \geq 0$ for
all $k\geq 0$~\cite[p.~338]{Sp-64}, where $\Delta$ is the difference operator
$\Delta (c_n) = (c_n - c_{n+1})$.

\medskip
We also consider the following related question: let $t(k)=\sum_{n \geq  0} t_n k^n$, starting
\begin{equation*}
t(k)=1+\frac{3}{64}k^4+\frac{3}{64}k^6+\frac{711}{16384}k^8+\frac{327}{8192}k^{10}+\cdots,
\end{equation*}
be the unique power series solution of the algebraic equation
\begin{equation} \label{eq:alg-t}
     -27(1-k^2)t^4+18(1-k^2)t^2+2(2-k^2)^2t+1-k^2+k^4=0.
\end{equation}
We want to prove that all the coefficients $t_n$ are non-negative.

First, we remark that $t(\varphi(x)^{\frac12}) = { {({x}^{2}+x+1)}/{(1+x-2x^2)}}$. Since 
$s(\varphi(x)^{\frac12}) = (2x+1)/(x+2)$, 
this implies that 
$t(k) = R(s(k))$,
where $R$ is the rational function
\[R(z) = {\frac {{z}^{2}-z+1}{3\, z \left( 1-z \right) }}.
\]
As a consequence of this and of identity~\eqref{eq:closed}, we get that 
\begin{equation} \label{eq:closed-t}
	t(k) = (1+3 H^4)/(1-9 H^4) = (1+3 H^4) \cdot \sum_{n \geq 0} (9H^4)^n, \end{equation}
where $H$ is the hypergeometric function with non-negative coefficients
\[ H = \frac{k}{4} \cdot
\pFq{2}{1}{\frac12,\frac56}{\frac53}{k^2}  
=
 {\frac{1}{4}}k+{\frac{1}{16}}{k}^{3}+{\frac{33}{1024}}{k}^{5}+\cdots,\]
therefore all the coefficients of $t(k)$ are non-negative.

\end{document}